\newtheorem{theorem}{Theorem}[section]
\newtheorem{lemma}[theorem]{Lemma}
\newtheorem{proposition}[theorem]{Proposition}
\newtheorem{corollary}[theorem]{Corollary}
\theoremstyle{definition}
\newtheorem{definition}[theorem]{Definition}
\theoremstyle{remark}
\newtheorem{remark}[theorem]{Remark}
\numberwithin{equation}{section}
\begin{document}

\title[Universal Sofic and Hyperlinear Groups]{Logic for Metric Structures and the Number\\ of Universal Sofic and Hyperlinear
Groups}


\author{Martino Lupini}
\address{Martino Lupini\\
Department of Mathematics and Statistics\\
N520 Ross, 4700 Keele Street\\
Toronto Ontario M3J 1P3, Canada, and Fields Institute for Research in
Mathematical Sciences\\
222 College Street\\
Toronto ON M5T 3J1, Canada.}
\email{mlupini@mathstat.yorku.ca}
\thanks{The research was supported by the York University Elia Scholars Program, the ESF Short Visit Grant No. 4154, the National University of Singapore and the John Templeton Foundation}

\subjclass[2010]{Primary 03C20 03E35 20F69; Secondary 16E50}
\keywords{Ultraproducts, sofic groups, hyperlinear groups, logic for metric structures}

\date{}

\dedicatory{}



\begin{abstract}

Using the model theory of metric structures, we give an alternative proof of the following result by Thomas: If the Continuum Hypothesis fails, then there are $2^{2^{\aleph _{0}}}$ universal sofic groups up to isomorphism. This method is also applicable to universal hyperlinear groups, giving a positive answer to a question posed by Thomas. 


\end{abstract}

\maketitle

\section{Introduction}

Sofic and hyperlinear groups are two classes of discrete groups that have
received the attention of many mathematicians in different areas in the last
ten years. It is known that the class of sofic groups is contained in the
class of hyperlinear groups, but it is not known if this containment is
proper or whether the class of hyperlinear groups is equal to the class of
all discrete groups. For a complete presentation of this topic, the reader
is referred to \cite{Pestov1}. In \cite{Elek-Szabo1}, Elek and Szab\'{o}
proved that a countable group $\Gamma $ is sofic if and only if it can be
embedded in some (or, equivalently, every) ultraproduct of the symmetric
groups, regarded as a bi-invariant metric group with respect to the
normalized Hamming distance. (See \cite{BY-B-H-U} for the definition of
metric ultraproducts and an introduction to the logic for metric structures.) An
analogous characterization holds for hyperlinear groups, where the symmetric
groups are replaced with the finite rank unitary groups, endowed with the
normalized distance induced by the Hilbert-Schmidt norm. In view of this
characterization, metric ultraproducts of symmetric groups are said to be 
universal sofic groups, and metric ultraproducts of unitary groups are said to be 
universal hyperlinear groups. In \cite{Thomas}, Thomas proved
that if the Continuum Hypothesis fails, then there are $2^{\mathfrak{c}}$ many
metric ultraproducts of the symmetric groups up to (algebraic) isomorphism,
where $\mathfrak{c}$ denotes the cardinality of the continuum, and asked if
the same statement holds for ultraproducts of the unitary groups. In this
paper, we give a proof of Thomas' result, by means of the logic for metric
structures, which also applies in the case of the unitary groups. From this, I
deduce also the existence, under the failure of the Continuum Hypothesis, of 
$2^{\mathfrak{c}}$ many metric ultraproducts of the matrix algebras regarded
as ranked regular rings, up to algebraic isomorphism. This problem was
raised by G\'{a}bor Elek, in view of Proposition~8.3 in \cite{Farah-Shelah},
asserting the existence, under the failure of the Continuum hypothesis, of $%
2^{\mathfrak{c}}$ isomorphism classes of metric ultraproducts of the complex
matrix algebras, regarded as tracial von Neumann algebras.

Under the Continuum Hypothesis, the number of metric ultraproducts of
symmetric and unitary groups up to isomorphism is still unknown. In this
case, the statement that they are all isomorphic is equivalent to the
assertion that they are all elementarily equivalent as metric structures. At the end of this
article, we prove a partial result in this direction, showing that they have
the same $\Sigma _{2}$-theories as metric structures.

This article is organized as follows. In Section~\ref{Section: The order
property for symmetric and unitary groups}, we show that the sequences of
symmetric and unitary groups have the order property. This implies that,
under the failure of the Continuum Hypothesis, there are $2^{\mathfrak{c}}$
many ultraproducts of these sequences up to isometric isomorphism. In
Section~\ref{Section: Non-isomorphic universal sofic and hyperlinear groups}, we introduce some results and terminology from \cite{Farah-Shelah} in order
to deduce the existence, under the failure of the Continuum Hypothesis, of $%
2^{\mathfrak{c}}$ many ultraproducts up to algebraic isomorphism. In Section~\ref{Section: Ranked regular rings}, we infer from this result the existence,
under the failure of the Continuum Hypothesis, of $2^{\mathfrak{c}}$ many
ultraproducts of the sequence of matrix algebras up to algebraic
isomorphism. In Section~\ref{Section: The theories of universal sofic and
hyperlinear groups}, we prove that all universal sofic groups are
elementarily equivalent as metric structures with respect to $\Sigma _{2}$ formulas and that the
same holds for universal hyperlinear groups.

If $n\in \mathbb{N}$, the symmetric group acting on $\left\{ 1,\ldots
,n\right\} $ on the left is denoted by $S_{n}$ and its identity by $e_{n}$. The unitary
group of $n\times n$ matrices over $\mathbb{C}$ is denoted by $U_{n}$ and
its identity by $I_{n}$. The symmetric group $S_{n}$ is regarded as metric
group with respect to the bi-invariant metric defined by%
\begin{equation*}
d_{S_{n}}\left( \sigma ,\tau \right) =\frac{1}{n}\left\vert \left\{ i\in
\left\{ 1,\ldots ,n\right\} \left\vert \,\sigma \left( i\right) \neq \tau
\left( i\right) \right. \right\} \right\vert \text{,}
\end{equation*}%
called the normalized Hamming distance. The unitary group $U_{n}$ is endowed
with the metric%
\begin{equation*}
d_{U_{n}}\left( A,B\right) =\frac{\left\Vert A-B\right\Vert _{2}}{2\sqrt{n}}\text{,}
\end{equation*}%
where $\left\Vert \cdot \right\Vert _{2}$ denotes the Hilbert-Schmidt norm.
Usually the factor $\frac{1}{2}$ is omitted. It is introduced here only
because, in the logic for bounded metric structures, for convenience all the
metric spaces are supposed to have diameter at most $1$. By the universal sofic
and, respectively, hyperlinear groups, we will mean in the following the
metric ultraproducts of the sequences of the symmetric and, respectively,
unitary groups. Consider, for $n\in \mathbb{N}$, the injective homomorphism $%
\sigma \mapsto A_{\sigma }$ from $S_{n}$ to $U_{n}$ defined by%
\begin{equation}
A_{\sigma }\left( b_{i}\right) =b_{\sigma \left( i\right) }  \label{Eq: 1.1}
\end{equation}%
for $i\in \left\{ 1,2,\ldots ,n\right\} $, where $b_{1},\ldots ,b_{n}$ is
the canonical basis of $\mathbb{C}^{n}$, and observe that%
\begin{equation}
d_{U_{n}}\left( A_{\sigma },A_{\tau }\right) =\sqrt{\frac{d_{S_{n}}\left(
\sigma ,\tau \right) }{2}}  \label{Eq: 1.2}
\end{equation}%
(see \cite{Pestov1}).

In the rest of the paper, we will use the following notational conventions:
If $a,b$ are elements of a group $G$, then $\left[ a,b\right] $ denotes the
element $aba^{-1}b^{-1}$ of $G$. Uppercase calligraphic letters such as $\mathcal{U},\mathcal{V}$ are reserved for ultrafilters over 
$\mathbb{N}$. If $\left( M_{n}\right) _{n\in \mathbb{N}}$ is a sequence of
metric structures and $\mathcal{U}$ is an ultrafilter over $\mathbb{N}$, the
corresponding metric ultraproduct is denoted by $\prod_{n}^{\mathcal{U}%
}M_{n} $, while the ultrapower of a metric structure $M$ by $\mathcal{U}$ is
denoted by $M^{\mathcal{U}}$. We will denote by $\bar{x}$ and $\bar{y}$ $m$%
-tuples of variables $x_{1},\ldots ,x_{m}$ and $y_{1},\ldots ,y_{m}$. Every
metric will be denoted by $d$. The context will make clear which metric we are
referring to each time. The set of natural numbers $\mathbb{N}$ is supposed
not to contain $0$, and if $r$ is a real number, then $\left\lceil
r\right\rceil $ denotes the smallest integer greater than or equal to $r$. For convenience, we suppose $0$ to be a multiple of any natural number. Finally we will write, as usual, the acronym CH to stand for
\textquotedblleft Continuum Hypothesis".

\section{The order property for symmetric and unitary groups}

\label{Section: The order property for symmetric and unitary groups}

In \cite{Farah-Shelah}, Theorem~6.1, aiming to count the number of
ultrapowers of a $C^{\ast }$-algebra or of a von Neumann algebra, Farah and
Shelah isolate a condition ensuring that a sequence of metric structures has 
$2^{\mathfrak{c}}$ many ultraproducts up to isometric isomorphism, under the
failure of CH.

In this section, we will consider a particular case of \cite{Farah-Shelah},
Theorem~6.1, for bi-invariant metric groups, and we will infer from that the
following:

\begin{proposition}
\label{Proposition: non-isometrically isomorphic}If CH fails and $\left(
k_{n}\right) _{n\in \mathbb{N}}$ is a strictly increasing sequence of
natural numbers, then the sequences $\left( S_{k_{n}}\right) _{n\in \mathbb{N%
}}$ and $\left( U_{k_{n}}\right) _{n\in \mathbb{N}}$ have $2^{\mathfrak{c}}$
many ultraproducts up to isometric isomorphism.
\end{proposition}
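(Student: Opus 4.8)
The plan is to verify, for both sequences, the hypothesis of the bi-invariant-metric-group case of \cite{Farah-Shelah}, Theorem 6.1, promised above: that the sequence of metric groups has the \emph{order property}. Concretely, I must produce a single formula $\varphi(\bar{x},\bar{y})$, two reals $r<s$, and for every $m\in\mathbb{N}$ a choice of tuples $\bar{a}_{1},\ldots,\bar{a}_{m}$ and $\bar{b}_{1},\ldots,\bar{b}_{m}$ inside $S_{k_{n}}$, for all sufficiently large $n$, such that
\begin{equation*}
\varphi(\bar{a}_{i},\bar{b}_{j})\leq r\ \text{ for }i\leq j,\qquad \varphi(\bar{a}_{i},\bar{b}_{j})\geq s\ \text{ for }i>j\text{.}
\end{equation*}
Passing to the ultraproduct, such uniformly separated configurations of unbounded length assemble into an infinite half-graph with a fixed gap, so that $\prod_{n}^{\mathcal{U}}S_{k_{n}}$ is unstable, and the counting criterion of \cite{Farah-Shelah} then yields $2^{\mathfrak{c}}$ pairwise non-isometrically-isomorphic ultraproducts under the failure of CH.

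For the symmetric groups I would take $\varphi(x,y)=d([x,y],e)$, which by bi-invariance equals $d(xy,yx)$ and so measures the failure of $x$ and $y$ to commute; the asymmetry needed to encode a linear order is supplied by non-commutativity rather than by the (symmetric) metric. I would choose each $a_{i}$ and $b_{j}$ to be an involution, with supports arranged so that $a_{i}$ and $b_{j}$ commute exactly when $i\leq j$, while for $i>j$ the product $a_{i}b_{j}$ moves a definite, $m$-independent proportion of the points. Since $[a_{i},b_{j}]=(a_{i}b_{j})^{2}$ for involutions, this forces $d([a_{i},b_{j}],e)=0$ when $i\leq j$ and $d([a_{i},b_{j}],e)$ bounded below by a fixed constant when $i>j$.

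The unitary case then follows without a separate combinatorial argument, using the homomorphism $\sigma\mapsto A_{\sigma}$ of \eqref{Eq: 1.1}. The images satisfy $[A_{a_{i}},A_{b_{j}}]=A_{[a_{i},b_{j}]}$, and \eqref{Eq: 1.2} gives $d_{U_{k_{n}}}([A_{a_{i}},A_{b_{j}}],I)=\sqrt{d_{S_{k_{n}}}([a_{i},b_{j}],e)/2}$. As $t\mapsto\sqrt{t/2}$ is a fixed increasing function fixing $0$, the separation $r<s$ is transported to the separation $\sqrt{r/2}<\sqrt{s/2}$, so the same formula and the pushed-forward tuples witness the order property for $(U_{k_{n}})_{n\in\mathbb{N}}$ as well.

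The main obstacle is the middle step: arranging the involutions so that the separation constant is bounded away from $0$ \emph{uniformly in the length} $m$. Any naive encoding of the relation $i\leq j$ by a nested or interleaved family of supports makes the consecutive pairs $i=j+1$ overlap in only a $1/m$-fraction of the points, so that $d([a_{i},b_{j}],e)\to 0$ along the diagonal and the gap collapses in the ultraproduct. Overcoming this requires a configuration in which the commuting/non-commuting dichotomy is \emph{sharp}, the commutator being either trivial or moving a fixed proportion of the points with no intermediate regime, and this is the technical heart of the argument.
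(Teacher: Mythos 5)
Your plan follows the same route as the paper: verify the hypothesis of the bi-invariant-metric-group case of Farah--Shelah (Proposition \ref{Proposition: Farah-Shelah proposition}) with the commutator formula $d\left(\left[x,y\right],e\right)$, then transport the witnesses to $U_{k_n}$ via $\sigma\mapsto A_\sigma$ and (\ref{Eq: 1.2}); that last reduction you carry out correctly, and it is exactly the paper's Lemma \ref{Lemma: order property for unitary groups}. But there is a genuine gap: the witnessing tuples are never constructed. You correctly diagnose that nested or interleaved supports fail (consecutive pairs overlap in only a $1/m$-fraction of points, so the separation collapses along the diagonal), and you then leave the resolution as \emph{the technical heart of the argument} --- but that heart is precisely the content of the paper's proof (Lemmas \ref{Lemma: order property for symmetric groups 1} and \ref{Lemma: order property symmetric groups}), so what you have is a correct plan plus an identification of the difficulty, not a proof. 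There is also a small logical slip in your involution step: it is not enough that $a_ib_j$ moves a fixed proportion of the points, since a product of disjoint transpositions moves many points yet squares to the identity; what you need is that $\left(a_ib_j\right)^2$ itself moves a fixed proportion of the points.

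For the record, the construction that overcomes your obstacle is the following. Let $S_3\times\cdots\times S_3$ ($l$ factors) act coordinatewise on $\{1,2,3\}^l$, identified with $S_{3^l}$. Take $\sigma_{l,i}$ to be $(12)$ in each of the first $i$ coordinates and trivial elsewhere, and $\tau_{l,j}$ to be $(23)$ in coordinate $j$ alone. If $i<j$ they act on disjoint sets of \emph{coordinates}, hence commute; if $i\geq j$ their commutator is the $3$-cycle $(132)$ in coordinate $j$, which fixes no point of $\{1,2,3\}^l$ at all, so its normalized Hamming distance from the identity is exactly $1$, independently of $l$. In your language: disjointness is arranged among coordinates rather than among points of the underlying set, so a noncommuting pair produces a commutator with no fixed points, rather than one supported on a thin overlap region --- there is no intermediate regime, which is exactly the sharp dichotomy you asked for. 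Embedding $S_{3^l}$ into $S_n$ (where $n=3^lk+r$, $0\leq r<3^l$) diagonally on $\{1,\ldots,3^l\}\times\{1,\ldots,k\}$ and fixing the remaining $r$ points keeps the separation at least $\frac{k}{k+1}\geq\frac{1}{2}$ for all $n\geq 3^l$, giving the length-independent constant $\gamma$ your argument requires; the unitary case then follows by your (and the paper's) push-forward, with constant $\frac{1}{2}$ after applying $t\mapsto\sqrt{t/2}$.
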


In the following section, after introducing notation and definitions from 
\cite{Farah-Shelah}, we will refine this result, showing that in this case
there are in fact $2^{\mathfrak{c}}$ many ultraproducts up to algebraic
isomorphism, under the failure of CH.

The following proposition is a particular case of \cite{Farah-Shelah}, Theorem~6.1, obtained by considering the
language of bi-invariant metric groups and the formula 
\begin{equation*}
\varphi \left(x_{1},x_{2},y_{1},y_{2}\right) 
\end{equation*}
defined by
\begin{equation*}
d\left( \left[ x_{1},y_{2}\right] ,e\right) \text{.}
\end{equation*}

\begin{proposition}
\label{Proposition: Farah-Shelah proposition}Let $\left( k_{n}\right) _{n\in 
\mathbb{N}}$ be a strictly increasing sequence of natural numbers and $%
\left( G_{n}\right) _{n\in \mathbb{N}}$ be a sequence of bi-invariant metric
groups with uniformly bounded diameter. Suppose that, for some constant $%
\gamma >0$ and every $l\in \mathbb{N}$, for all but finitely many $n\in 
\mathbb{N}$, $G_{n}$ contains sequences $\left( g_{n,i}\right) _{i=1}^{l}$
and $\left( h_{n,i}\right) _{i=1}^{l}$ such that, for every $1\leq i<j\leq l$, $g_{n,i}$ and $h_{n,j}$ commute, while if $1\leq j\leq i\leq l$,%
\begin{equation*}
d\left( \left[ g_{n,i},h_{n,j}\right] ,e_{G_{n}}\right) \geq \gamma \text{.}
\end{equation*}%
Then, under the failure of CH, there are $2^{\mathfrak{c}}$ many metric ultraproducts of the sequence $\left(
G_{k_{n}}\right) _{n\in \mathbb{N}}$ up to isometric isomorphism.
\end{proposition}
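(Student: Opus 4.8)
The plan is to invoke the general result from Farah--Shelah (\cite{Farah-Shelah}, Theorem 6.1) directly, and the main work is to verify that the hypothesis on the combinatorial configuration of elements $g_{n,i}, h_{n,j}$ is exactly what specializes their ``order property'' to this group-theoretic setting. Recall that the Farah--Shelah theorem says: if a formula $\varphi(\bar x, \bar y)$ has the order property for a sequence of metric structures (uniformly in $n$, with a fixed gap $\gamma$), then under $\neg\mathrm{CH}$ the sequence admits $2^{\mathfrak c}$ many ultraproducts up to isometric isomorphism. So first I would recall the precise definition of a formula $\varphi$ having the order property for $(G_n)_{n}$: there must exist $\gamma > 0$ such that for each $l \in \mathbb N$, for all but finitely many $n$, one can find tuples $\bar a_1, \dots, \bar a_l$ and $\bar b_1, \dots, \bar b_l$ in $G_n$ with
\begin{equation*}
\varphi^{G_n}(\bar a_i, \bar b_j) = 0 \quad \text{for } i < j, \qquad \varphi^{G_n}(\bar a_i, \bar b_j) \geq \gamma \quad \text{for } j \leq i.
\end{equation*}

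Next I would specialize to the language of bi-invariant metric groups and the specific formula $\varphi(x_1,x_2,y_1,y_2) = d([x_1,y_2], e)$ indicated in the excerpt. With this choice, the relevant entries of the tuples are $x_1$ and $y_2$, so writing $\bar a_i = (g_{n,i}, \ast)$ and $\bar b_j = (\ast, h_{n,j})$, the value $\varphi^{G_n}(\bar a_i, \bar b_j)$ becomes $d([g_{n,i}, h_{n,j}], e_{G_n})$. The order-property conditions then translate precisely into the hypotheses of the Proposition: for $i < j$ we need $d([g_{n,i}, h_{n,j}], e_{G_n}) = 0$, i.e.\ $[g_{n,i}, h_{n,j}] = e_{G_n}$, which is exactly the requirement that $g_{n,i}$ and $h_{n,j}$ commute; and for $j \leq i$ we need $d([g_{n,i}, h_{n,j}], e_{G_n}) \geq \gamma$, which is the stated noncommutativity gap. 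Thus the hypotheses of the Proposition say exactly that $\varphi$ has the order property for $(G_n)_n$ with constant $\gamma$.

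Having matched the hypotheses, I would then simply apply \cite{Farah-Shelah}, Theorem 6.1, to the sequence $(G_{k_n})_{n}$ to conclude that, under $\neg\mathrm{CH}$, there are $2^{\mathfrak c}$ many pairwise non-isometrically-isomorphic metric ultraproducts. One point requiring a small remark is the passage to the subsequence $(G_{k_n})_n$: since $(k_n)$ is strictly increasing and the order-property conditions are required only for all but finitely many $n$, the configuration persists along the subsequence, so $\varphi$ has the order property for $(G_{k_n})_n$ with the same $\gamma$; the uniformly bounded diameter hypothesis is likewise inherited. The other routine verification is that the language of bi-invariant metric groups together with the bounded diameter assumption places $(G_n)_n$ within the framework of continuous logic to which Farah--Shelah applies, so that the quantity $d([x_1,y_2],e)$ is genuinely a formula (a $1$-Lipschitz, uniformly continuous combination of the basic group operations and the metric) in that language.

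The main obstacle is not in the proof itself, which is essentially a translation, but rather in correctly identifying the precise formulation of the order property used in \cite{Farah-Shelah} and confirming that it is the two-variable-block version matching $\varphi(x_1,x_2,y_1,y_2)$ with the $\bar x$ and $\bar y$ playing asymmetric roles. The only subtlety worth double-checking is the direction of the inequality in the indices (the half-graph pattern: commuting below the diagonal in the sense $i<j$, and bounded-away-from-commuting on and above it for $j \leq i$), since swapping the roles would still yield an order property but one must ensure consistency with the conventions of the cited theorem so that the gap $\gamma$ is used in the correct cells of the half-graph.
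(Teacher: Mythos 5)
Your proposal is correct and matches the paper's treatment exactly: the paper offers no independent argument for this proposition, but presents it precisely as the specialization of Theorem 6.1 of \cite{Farah-Shelah} to the language of bi-invariant metric groups with the padded formula $\varphi\left(x_{1},x_{2},y_{1},y_{2}\right)=d\left(\left[x_{1},y_{2}\right],e\right)$, so that the pairs $\left(g_{n,i},h_{n,i}\right)$ realize the half-graph configuration required there, exactly as you describe. Your additional remarks (persistence of the hypothesis along the subsequence $\left(G_{k_{n}}\right)_{n}$, and that $d\left(\left[x_{1},y_{2}\right],e\right)$ is a legitimate formula of the metric language) are the only verifications needed and are handled correctly.
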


Thus, in order to prove Proposition~\ref{Proposition: non-isometrically
isomorphic}, it is enough to show that the sequences of symmetric and
unitary groups satisfy the hypothesis of Proposition~\ref{Proposition:
Farah-Shelah proposition}.

\begin{lemma}
\label{Lemma: order property for symmetric groups 1}
For every $l\in \mathbb{N}$, there are sequences $\left( \sigma
_{l,i}\right) _{i=1}^{l},\left( \tau _{l,i}\right) _{i=1}^{l}$ of elements
of $S_{3^{l}}$ such that, if $1\leq i<j\leq l$, $\sigma _{l,i}$ and $\tau
_{l,j}$ commute, while if $1\leq j\leq i\leq l$, $\left[ \sigma _{l,i},\tau
_{l,j}\right] $ is the product of $3^{l-1}$ disjoint cycles of length $3$.
\end{lemma}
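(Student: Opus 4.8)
The plan is to realize $S_{3^{l}}$ as the symmetric group on the abelian group $V=\left(\mathbb{Z}/3\mathbb{Z}\right)^{l}$, which has exactly $3^{l}$ elements, and to build both families out of two elementary kinds of permutations of $V$: translations and diagonal linear automorphisms. Concretely, for $1\leq j\leq l$ I would let $\tau_{l,j}$ be the translation $x\mapsto x+e_{j}$ by the $j$-th standard basis vector $e_{j}$, and for $1\leq i\leq l$ I would let $\sigma_{l,i}$ be the linear map that multiplies each of the first $i$ coordinates by $2$ (equivalently, negates them, since $2\equiv -1 \pmod 3$) and fixes the remaining coordinates. Both are genuine permutations of $V$: a translation is a bijection, and $\sigma_{l,i}$ is invertible because its diagonal entries $2,\dots,2,1,\dots,1$ are all units of $\mathbb{Z}/3\mathbb{Z}$.

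The heart of the argument is a single computation. For any invertible linear map $\sigma(x)=Ax$ and any translation $\tau(x)=x+v$ on $V$, expanding $\sigma\tau\sigma^{-1}\tau^{-1}$ directly gives $\left[\sigma,\tau\right](x)=x+(A-I)v$; that is, the commutator is again a translation, namely by the vector $(A-I)v$. Applying this with $A=A_{i}$ (the diagonal map above) and $v=e_{j}$, I obtain that $\left[\sigma_{l,i},\tau_{l,j}\right]$ is translation by $(A_{i}-I)e_{j}$. The diagonal form makes the ``staircase'' behaviour transparent: $(A_{i}-I)e_{j}$ equals $e_{j}$ when $j\leq i$, and equals $0$ when $j>i$.

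From this the two cases follow immediately. When $i<j$ the commutator is the identity, so $\sigma_{l,i}$ and $\tau_{l,j}$ commute, as required. When $j\leq i$ the commutator is translation by the nonzero vector $e_{j}$; such a translation has order $3$ and no fixed points, so each of its orbits is $\{x,x+e_{j},x+2e_{j}\}$ and has size exactly $3$, whence it is a product of $3^{l}/3=3^{l-1}$ disjoint cycles of length $3$, again as required. The computation itself is routine; the only things one must pin down are the direction of the commutator convention $[a,b]=aba^{-1}b^{-1}$ and the choice of $A_{i}$ making the equivalence ``$(A_{i}-I)e_{j}\neq 0 \iff j\leq i$'' hold exactly. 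The genuine conceptual point that drives the whole construction is the observation that a nonzero translation of $\left(\mathbb{Z}/3\mathbb{Z}\right)^{l}$ is automatically a fixed-point-free permutation of order $3$, hence precisely a product of $3^{l-1}$ disjoint $3$-cycles; once one insists that the commutators be such translations, the rest of the construction essentially forces itself.
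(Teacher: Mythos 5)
Your proof is correct, and it is essentially the paper's construction in different clothing: identifying $\{1,2,3\}^{l}$ with $(\mathbb{Z}/3\mathbb{Z})^{l}$, your coordinate-wise negations and translations are exactly elements of the coordinate-wise subgroup $S_{3}\times \cdots \times S_{3}\leq S_{3^{l}}$ that the paper works in (negation of a coordinate is a transposition of $\{0,1,2\}$ fixing $0$, and translation by $e_{j}$ is a $3$-cycle in the $j$-th coordinate), arranged in the same staircase pattern of supports. Your affine commutator identity $[\sigma,\tau](x)=x+(A-I)v$ is a slick repackaging of the paper's direct computation $[(12),(23)]=(132)$ in $S_{3}$; in both cases the commutator is a $3$-cycle acting in the single coordinate $j$, hence a product of $3^{l-1}$ disjoint $3$-cycles.
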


\begin{proof}
If $l\in \mathbb{N}$, consider the action of $\overset{l\text{ times}}{%
\overbrace{S_{3}\times \ldots \times S_{3}}}$ on $\left\{ 1,2,3\right\} ^{l}$
defined by%
\begin{equation*}
\left( \sigma _{1},\ldots ,\sigma _{l}\right) \left( i_{1},\ldots
,i_{l}\right) =\left( \sigma _{1}\left( i_{1}\right) ,\ldots ,\sigma
_{l}\left( i_{l}\right) \right) \text{,}
\end{equation*}%
where $\sigma _{1},\ldots ,\sigma _{l}\in S_{3}$ and $i_{1},\ldots ,i_{l}\in
\left\{ 1,2,3\right\} $. This action defines an isometric embedding%
\begin{equation*}
\left( \sigma _{1},\ldots ,\sigma _{l}\right) \rightarrow \alpha \left(
\sigma _{1},\ldots ,\sigma _{l}\right)
\end{equation*}%
of $\overset{l\text{ times}}{\overbrace{S_{3}\times \ldots \times S_{3}}}$
into the group of permutations of $\left\{ 1,2,3\right\} ^{l}$, which can be identified with $S_{3^{l}}$. Define also, for $i=1,\ldots ,l$,%
\begin{equation*}
\sigma _{l,i}=\alpha \left( \overset{i\text{ times}}{\overbrace{\left(
12\right) ,\ldots ,\left( 12\right) }},\overset{l-i\text{ times}}{\overbrace{%
e_{3},\ldots ,e_{3}}}\right)
\end{equation*}%
and%
\begin{equation*}
\tau _{l,i}=\alpha \left( \overset{i-1\text{ times}}{\overbrace{e_{3},\ldots
,e_{3}}},\left( 23\right) ,\overset{l-i\text{ times}}{\overbrace{%
e_{3},\ldots ,e_{3}}}\right) \text{.}
\end{equation*}%
Observe that, for $i<j$, 
\begin{equation*}
\left[ \sigma _{l,i},\tau _{l,j}\right] =e_{3^{l}}\text{,}
\end{equation*}%
while, for $i\geq j$,%
\begin{equation*}
\left[ \sigma _{l,i},\tau _{l,j}\right] =\alpha \left( \overset{j-1\text{
times}}{\overbrace{e_{3},\ldots ,e_{3}}},\left( 132\right) ,\overset{l-j%
\text{ times}}{\overbrace{e_{3},\ldots ,e_{3}}}\right) \text{.}\qedhere
\end{equation*}
\end{proof}

\begin{lemma}
\label{Lemma: order property symmetric groups}If $n,k,l\in \mathbb{N}$ and $%
r\in \mathbb{N}\cup \left\{ 0\right\} $ are such that $n=3^{l}k+r$ and $%
0\leq r<3^{l}$, then there are sequences $\left( \Sigma _{n,i}\right) _{i=1}^{l}$
and $\left( T_{n,i}\right) _{i=1}^{l}$ in $S_{n}$ such that, for $1\leq
i,j\leq l$, $\Sigma _{n,i}$ and $T_{n,i}$ commute if $i<j$, while $\left[
\Sigma _{n,i},T_{n,j}\right] $ is the product of $3^{l-1}k$ disjoint cycles
of length $3$ and, in particular,%
\begin{equation*}
d\left( \left[ \Sigma _{n,i},T_{n,j}\right] ,e\right) =\frac{3^{l}k}{3^{l}k+r%
}\geq \frac{k}{k+1}\geq \frac{1}{2}\text{,}
\end{equation*}
if $i\geq j$.
\end{lemma}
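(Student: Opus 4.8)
The plan is to reduce the general case $n=3^{l}k+r$ to the building block provided by Lemma~\ref{Lemma: order property for symmetric groups 1}. I would first partition the set $\{1,\dots,n\}$ into $k$ blocks of size $3^{l}$, together with a leftover block of size $r$, and identify each size-$3^{l}$ block with $\{1,2,3\}^{l}$ via a fixed bijection. On each of the $k$ blocks I can then deploy a copy of the permutations $\sigma_{l,i}$ and $\tau_{l,i}$ from the previous lemma.

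More precisely, for each $1\le i\le l$ I would define $\Sigma_{n,i}$ to act as $\sigma_{l,i}$ on every one of the $k$ blocks simultaneously (using the chosen identifications) and to fix the remaining $r$ points, and similarly define $T_{n,i}$ to act as $\tau_{l,i}$ on each block and fix the leftover points. Since the permutations on distinct blocks have disjoint supports, the commutation and commutator computations carry over block by block from Lemma~\ref{Lemma: order property for symmetric groups 1}: if $i<j$ then $\Sigma_{n,i}$ and $T_{n,j}$ commute on each block, hence commute on all of $\{1,\dots,n\}$, while if $i\ge j$ then $[\Sigma_{n,i},T_{n,j}]$ restricts on each block to the commutator $[\sigma_{l,i},\tau_{l,j}]$, which by the previous lemma is a product of $3^{l-1}$ disjoint $3$-cycles.

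The distance computation is then just bookkeeping. Across the $k$ blocks the commutator $[\Sigma_{n,i},T_{n,j}]$ (for $i\ge j$) is a product of $k\cdot 3^{l-1}=3^{l-1}k$ disjoint $3$-cycles, and these move exactly $3\cdot 3^{l-1}k=3^{l}k$ of the $n$ points while fixing the $r$ leftover points. Hence
\begin{equation*}
d\left(\left[\Sigma_{n,i},T_{n,j}\right],e\right)=\frac{3^{l}k}{n}=\frac{3^{l}k}{3^{l}k+r}\text{.}
\end{equation*}
Using $0\le r<3^{l}$ I get $r\le 3^{l}-1<3^{l}\le 3^{l}k$ (recall $k\ge 1$ when $r>0$, and the case $k=0$ is vacuous since then $n=r<3^{l}$ forces $l$ too large), so the denominator is at most $3^{l}k+3^{l}k=2\cdot 3^{l}k$, giving the bound $\ge\frac{1}{2}$; the intermediate estimate $\frac{3^{l}k}{3^{l}k+r}\ge\frac{k}{k+1}$ follows from $r<3^{l}$.

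I do not expect any serious obstacle here: the whole argument is a disjoint-support reuse of the previous lemma, and the only points requiring care are the edge cases in the final inequality, namely checking that $\frac{k}{k+1}\ge\frac12$ really needs $k\ge1$ and confirming that the claimed inequality $\frac{3^{l}k}{3^{l}k+r}\ge\frac{k}{k+1}$ is valid under the constraint $r<3^{l}$. These are elementary manipulations of fractions.
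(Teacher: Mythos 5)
Your proof is correct and is essentially the paper's own argument: the paper also transports the pairs $(\sigma_{l,i},\tau_{l,j})$ from Lemma~\ref{Lemma: order property for symmetric groups 1} via the diagonal action of $S_{3^{l}}$ on $\{1,\ldots,3^{l}\}\times\{1,\ldots,k\}$ (i.e., acting as $\sigma_{l,i}$ on each of the $k$ blocks) followed by the embedding into $S_{n}$ that fixes the last $r$ points, which is exactly your block decomposition. The only cosmetic remark is that your aside about the case $k=0$ is unnecessary, since $k\in\mathbb{N}$ and the paper's convention is that $\mathbb{N}$ excludes $0$, so $k\geq 1$ is automatic.
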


\begin{proof}
Consider the action of $S_{3^{l}}$ on $\left\{ 1,\ldots ,3^{l}\right\}
\times \left\{ 1,\ldots ,k\right\} $ defined by%
\begin{equation*}
\sigma \left( i,j\right) =\left( \sigma \left( i\right) ,j\right)
\end{equation*}%
for every $i\in \left\{ 1,\ldots ,3^{l}\right\} $ and $j\in \left\{ 1,\ldots
,k\right\} $. This defines an isometric embedding of $S_{3^{l}}$ into $%
S_{3^{l}k}$. Moreover, letting $S_{3^{l}k}$ act on the first $3^{l}k$
elements of $\left\{ 1,\ldots ,n\right\} $ defines an algebraic embedding of $%
S_{3^{l}k}$ into $S_{n}$. The composition $\Phi $ of these two embeddings is
an algebraic embedding of $S_{3^{l}}$ into $S_{n}$. For $1\leq i\leq l$, define%
\begin{equation*}
\Sigma _{n,i}=\Phi \left( \sigma _{l,i}\right)
\end{equation*}%
and%
\begin{equation*}
T_{n,i}=\Phi \left( \tau _{l,i}\right) \text{,}
\end{equation*}%
where $\sigma _{l,i}$ and $\tau _{l,i}$ are the elements of $S_{3^{l}}$ defined in Lemma~\ref{Lemma: order property for symmetric groups 1}. Then, if $1\leq i,j\leq l$,%
\begin{equation*}
\left[ \Sigma _{n,i},T_{n,j}\right] =\left[ \Phi \left( \sigma _{l,i}\right)
,\Phi \left( \tau _{l,j}\right) \right] =\Phi \left( \left[ \sigma
_{l,i},\tau _{l,j}\right] \right) .
\end{equation*}%
If $i<j$, $\left[ \sigma _{l,i},\tau _{l,j}\right] $ is the identity and,
hence, $\left[ \Sigma _{n,i},T_{n,j}\right] $ is the identity. If $i\geq j$,
then $\left[ \sigma _{l,i},\tau _{l,j}\right] $ is a product of $3^{l-1}$
disjoint $3$-cycles and, hence, $\left[ \Sigma _{n,i},T_{n,j}\right] $ is
the product of $3^{l-1}k$ disjoint $3$-cycles.
\end{proof}

\begin{lemma}
\label{Lemma: order property for unitary groups}If $n,k,l\in \mathbb{N}$
and $r\in \mathbb{N}\cup \left\{ 0\right\} $ are as in the statement of
Lemma~\ref{Lemma: order property symmetric groups}, then there are sequences 
$\left( b_{n,i}\right) _{i=1}^{l}$ and $\left( c_{n,i}\right) _{i=1}^{l}$ in 
$U_{n}$ such that $b_{n,i}$ and $c_{n,i}$ commute if $i<j$, while $d\left( %
\left[ b_{n,i},c_{n,j}\right] \right) \geq \frac{1}{2}$ if $i\geq j$.
\end{lemma}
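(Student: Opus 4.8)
The plan is to transport the configuration produced in Lemma~\ref{Lemma: order property symmetric groups} from $S_{n}$ to $U_{n}$ along the injective homomorphism $\sigma \mapsto A_{\sigma }$ introduced in equation~(\ref{Eq: 1.1}). Concretely, I would set $b_{n,i}=A_{\Sigma _{n,i}}$ and $c_{n,i}=A_{T_{n,i}}$ for $1\leq i\leq l$, where $\Sigma _{n,i}$ and $T_{n,i}$ are the elements of $S_{n}$ supplied by that lemma. Since $\sigma \mapsto A_{\sigma }$ is a group homomorphism, it carries commutators to commutators and the identity to the identity; in particular $[b_{n,i},c_{n,j}]=A_{[\Sigma _{n,i},T_{n,j}]}$ and $A_{e_{n}}=I_{n}$.

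First I would dispose of the commuting case. When $i<j$, Lemma~\ref{Lemma: order property symmetric groups} gives $[\Sigma _{n,i},T_{n,j}]=e_{n}$, so $[b_{n,i},c_{n,j}]=A_{e_{n}}=I_{n}$, which says precisely that $b_{n,i}$ and $c_{n,j}$ commute.

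For the case $i\geq j$, the point is to combine the distance estimate of Lemma~\ref{Lemma: order property symmetric groups} with the relation~(\ref{Eq: 1.2}) between $d_{S_{n}}$ and $d_{U_{n}}$. Applying~(\ref{Eq: 1.2}) with $\sigma =[\Sigma _{n,i},T_{n,j}]$ and $\tau =e_{n}$ yields $d_{U_{n}}\left( [b_{n,i},c_{n,j}],I_{n}\right) =\sqrt{d_{S_{n}}\left( [\Sigma _{n,i},T_{n,j}],e_{n}\right) /2}$, and since Lemma~\ref{Lemma: order property symmetric groups} bounds the inner Hamming distance below by $\tfrac{1}{2}$, the right-hand side is at least $\sqrt{1/4}=\tfrac{1}{2}$, as required.

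The argument is essentially a formal transfer along the embedding $A$, so I do not expect a serious obstacle. The only thing to watch is that the square-root in~(\ref{Eq: 1.2}) not erode the estimate below $\tfrac{1}{2}$; and indeed the constant $\tfrac{1}{2}$ appearing in the Hamming bound of Lemma~\ref{Lemma: order property symmetric groups} is exactly what is needed to survive the square root. I would also remark that the same $3$-adic combinatorial data works uniformly for every $n$ with $n=3^{l}k+r$, so the bound $\tfrac{1}{2}$ holds independently of $n$, which is precisely the uniformity required to feed these sequences into Proposition~\ref{Proposition: Farah-Shelah proposition}.
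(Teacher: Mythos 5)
Your proposal is correct and matches the paper's own proof essentially verbatim: both transfer the permutations $\Sigma_{n,i},T_{n,j}$ of Lemma~\ref{Lemma: order property symmetric groups} into $U_{n}$ via the embedding $\sigma \mapsto A_{\sigma }$ of~(\ref{Eq: 1.1}), use that this homomorphism preserves commutators, and convert the Hamming bound $\geq \tfrac{1}{2}$ into the Hilbert--Schmidt bound $\sqrt{\tfrac{1}{2}\cdot \tfrac{1}{2}}=\tfrac{1}{2}$ via~(\ref{Eq: 1.2}). Your closing observation that the square root exactly preserves the constant $\tfrac{1}{2}$, uniformly in $n$, is precisely the point of the paper's normalization.
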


\begin{proof}
Let $\sigma \mapsto A_{\sigma}$ be the embedding of $S_n$ into $U_n$ given by (\ref{Eq: 1.1}) and let
\begin{equation*}
b_{n,i}=A_{\Sigma _{n,i}}\quad \text{and}\quad c_{n,i}=A_{T_{n,i}}\text{,}
\end{equation*}%
where $\Sigma _{n,i},T_{n,i}\in S_n$ are the permutations given by Lemma~\ref{Lemma: order property symmetric groups}.
Observe that if $i<j$, then%
\begin{equation*}
\left[ \Sigma _{n,i},\Sigma _{n,j}\right] =e_{n}
\end{equation*}%
and hence
\begin{equation*}
\left[ A_{\Sigma _{n,i}},A_{T}{}_{n,j}\right] =A_{\left[ \Sigma
_{n,i},T_{n,j}\right] }=A_{e_{n}}=I_{n},
\end{equation*}%
while, if $i\geq j$,%
\begin{equation*}
d\left( \left[ \Sigma _{n,i},T_{n,i}\right] ,e_n\right) \geq \frac{1}{2}
\end{equation*}%
and, by (\ref{Eq: 1.2}),%
\begin{equation*}
d\left( \left[ A_{\Sigma _{n,i}},A_{T_{n,j}}\right] ,I_{n}\right) =\sqrt{%
\frac{d\left( \left[ \Sigma _{n,i},T_{n,j}\right] ,e_{n}\right) }{2}}\geq 
\frac{1}{2}.\qedhere
\end{equation*}
\end{proof}

Proposition~\ref{Proposition: non-isometrically isomorphic} is now an immediate consequence of Lemma~\ref{Lemma: order property symmetric groups} and Lemma~\ref{Lemma: order property for unitary groups}, together with Proposition~\ref{Proposition: Farah-Shelah proposition}.

\section{Non-isomorphic universal sofic and hyperlinear groups}

\label{Section: Non-isomorphic universal sofic and hyperlinear groups}In
this section, we will prove the following strengthening of Proposition~\ref{Proposition: non-isometrically
isomorphic}.

\begin{theorem}
\label{Theorem: main theorem}If CH fails and $\left( k_{n}\right) _{n\in 
\mathbb{N}}$ is an increasing sequence of natural numbers, then, up to
algebraic isomorphism, there are $2^{\mathfrak{c}}$ many ultraproducts of
both the sequence $\left( S_{k_{n}}\right) _{n\in \mathbb{N}}$ and the
sequence $\left( U_{k_{n}}\right) _{n\in \mathbb{N}}$.
\end{theorem}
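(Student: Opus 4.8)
The plan is to refine the counting argument behind Proposition \ref{Proposition: Farah-Shelah proposition} by exploiting the special form of the witnessing formula. Recall that Proposition \ref{Proposition: non-isometrically isomorphic} was obtained by applying the Farah--Shelah machinery to the formula $\varphi(x_1,x_2,y_1,y_2)=d([x_1,y_2],e)$, whose behaviour encodes the order property established in Lemmas \ref{Lemma: order property symmetric groups} and \ref{Lemma: order property for unitary groups}. The essential observation is that, in the ultraproduct, the distinction between the two sides of the order property is a purely algebraic one: for elements $g,h$ of a metric ultraproduct one has $d([g,h],e)=0$ exactly when the commutator $[g,h]$ equals the identity, that is, when $g$ and $h$ commute. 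Thus the zero-set of $\varphi$ defines, in every such ultraproduct, the commuting relation, which is expressible in the bare language of groups.

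First I would record this reformulation of the order property at the level of the ultraproduct. By Lemmas \ref{Lemma: order property symmetric groups} and \ref{Lemma: order property for unitary groups}, the witnessing tuples $(g_{n,i}),(h_{n,j})$ satisfy $[g_{n,i},h_{n,j}]=e$ when $i<j$ and $d([g_{n,i},h_{n,j}],e)\geq \frac{1}{2}$ when $i\geq j$. Passing to the ultraproduct, the first case yields elements that commute, while the second yields elements whose commutator has distance at least $\frac{1}{2}$ from the identity, and is in particular \emph{not} the identity. Hence the half-graph configuration produced by the order property coincides with the commuting/non-commuting pattern of the corresponding elements, and this pattern is detected by a quantifier-free group-theoretic formula, uniformly in the symmetric and the unitary cases.

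Next I would invoke the Farah--Shelah construction in the form that tracks this extra information. Their argument, under the failure of CH, produces a family of $2^{\mathfrak{c}}$ ultrafilters whose associated ultraproducts are pairwise distinguished by an invariant extracted from the order-property configuration --- concretely, from the order type of the cuts that the ambient structure is able to fill in the half-graph witnessed by $\varphi$. The point to isolate from their terminology is that this invariant is computed entirely from the zero-set of $\varphi$, and is therefore insensitive to the actual metric values away from $0$. Since that zero-set is the commuting relation, the invariant is preserved by \emph{any} isomorphism of the underlying abstract groups, not merely by isometric ones. Consequently the $2^{\mathfrak{c}}$ ultraproducts furnished by the construction remain pairwise non-isomorphic as groups, which is the assertion of Theorem \ref{Theorem: main theorem}; the argument is uniform in the two cases and so applies verbatim to $\left(S_{k_n}\right)_{n\in\mathbb{N}}$ and to $\left(U_{k_n}\right)_{n\in\mathbb{N}}$.

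The main obstacle I anticipate lies in this last step: one must state the Farah--Shelah counting theorem precisely enough to see that its distinguishing invariant depends only on the \emph{definable set} cut out by $\{\varphi=0\}$, and not on the full real-valued predicate $\varphi$. This is why the relevant definitions and results from \cite{Farah-Shelah} have to be imported and, where necessary, reformulated so that the half-graph and the resulting order-type invariant are phrased in terms of a relation rather than a metric predicate. Once this bookkeeping is in place, the substitution of the algebraic commuting relation for the metric formula $\varphi$ is immediate, and the passage from isometric to algebraic non-isomorphism follows.
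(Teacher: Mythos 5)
Your proposal is correct and takes essentially the same route as the paper: the published proof likewise rests on the observation that the zero-set of the witnessing formula $\min \left\{ 2d\left( \left[ x_{1},y_{2}\right] ,e\right) ,1\right\} $ is the purely algebraic commuting relation, and it resolves exactly the ``main obstacle'' you anticipate by proving Lemma \ref{Lemma: metric and discrete skeleton} (with Remark \ref{Remark: formulas}), which shows that an $\left( \aleph _{1},\varphi \right) $-skeleton like $\varphi $-chain is automatically an $\left( \aleph _{1},\psi \right) $-skeleton like $\psi $-chain in the algebraic reduct $M_{alg}$, where $\psi $ is the discrete formula defining the zero-set. The count then follows, as you describe, from the discrete instance of Lemma \ref{Lemma: class of models} applied to the reducts, so the $2^{\mathfrak{c}}$ ultraproducts are distinguished as abstract groups.
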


This result has already been proved by Thomas in \cite{Thomas} for
permutation groups. \L ukasz Grabowski pointed out that the case of
unitary groups can be deduced from Proposition~8.3 in \cite{Farah-Shelah},
using the fact that non-isomorphic type II von Neumann algebras have non-isomorphic
unitary groups (\cite{Feldman}, Theorem~4) and that the unitary group of an
ultraproduct of finite von Neumann algebras is the ultraproduct of the unitary groups (\cite%
{Ge-Hadwin}, Proposition~2.1). In the following, we will give a direct proof
of this result by means of the logic for metric structures. This generalization
of the usual discrete logic is suitable to deal with structures endowed with a
metric. For an introduction to this topic, the reader is referred to \cite%
{BY-B-H-U}. Structures and formulas in the usual discrete logic can be
considered particular cases of metric structures and formulas, where the
distance is interpreted as the trivial discrete distance defined by $d\left(
x,y\right) =1$ iff $x\neq y$. Thus, definitions and theorems stated in the
setting of the logic for metric structures subsume the analogous definitions and
theorems for the usual discrete logic as a particular case. For the sake of
simplicity, all the languages are henceforth supposed without relation
symbols, apart from the metric. If $M$ is a structure in such a language $%
\mathcal{L}$, denote by $M_{alg}$ the $\mathcal{L}$-structure obtained from $M$ by
replacing the metric on $M$ by the trivial discrete metric.

We now have to introduce some notation and recall some results from \cite%
{Farah-Shelah}. If $\mathcal{L}$ is a language, $\psi \left( \bar{x},\bar{y}%
\right) $ is an $\mathcal{L}$-formula, $\varepsilon \geq 0$ and $M$ is an $%
\mathcal{L}$-structure, then the relation $\prec _{\psi ,\varepsilon }$ on $M^{k}$
is defined by%
\begin{equation*}
\bar{a}\prec _{\psi ,\varepsilon }\bar{b}\Leftrightarrow \left( \psi
^{M}\left( \bar{a},\bar{b}\right) \leq \varepsilon \wedge \psi ^{M}\left( 
\bar{b},\bar{a}\right) \geq 1-\varepsilon \right) \text{.}
\end{equation*}%
A chain in $M^{k}$ with respect to the relation $\prec _{\psi ,\varepsilon }$
will be called a \emph{$\left( \psi ,\varepsilon \right) $-chain} in $M$.
The relation $\prec _{\psi ,0}$ will be denoted by $\prec _{\psi }$ and a $%
\left( \psi ,0\right) $-chain will be called a $\psi $-chain. If $M$ is an $%
\mathcal{L}$-structure and $\varphi \left( \bar{x},\bar{y}\right) $ an $%
\mathcal{L}$-formula, a $\psi $-chain $\mathcal{C}$ is called \emph{$\left(
\aleph _{1},\psi \right) $-skeleton like} if, for every $\bar{a}\in M^{k}$
there exists a countable $\mathcal{C}_{\bar{a}}\subset \mathcal{C}$ such that,
for every $\bar{b},\bar{c}\in \mathcal{C}$ such that%
\begin{equation*}
\left\{ x\in \mathcal{C}_{\bar{a}}\left\vert \,\bar{b}\preceq _{\psi
}x\preceq _{\psi }\bar{c}\right. \right\} =\varnothing \text{,}
\end{equation*}%
one has 
\begin{equation*}
\psi ^{M}\left( \bar{a},\bar{b}\right) =\psi ^{M}\left( \bar{a},\bar{c}%
\right) \quad \text{and}\quad \psi ^{M}\left( \bar{b},\bar{a}\right) =\psi
^{M}\left( \bar{c},\bar{a}\right) \text{.}
\end{equation*}%
The notion of a $\psi $-chain and an $\left( \aleph _{1},\psi \right) $-skeleton
like $\psi $-chain in a discrete structure for a discrete formula $\psi $
are obtained from the previous ones, as a particular case.

The following statement is proved in \cite{Farah-Shelah} (Proposition~6.6).

\begin{lemma}
\label{Lemma: construction ultrafilters}If $\varphi \left( \bar{x},\bar{y}%
\right) $ is an $\mathcal{L}$-formula, $I$ is a linear order of cardinality $%
\mathfrak{c}$ and $\left( M_{n}\right) _{n\in \mathbb{N}}$ is a sequence of $%
\mathcal{L}$-structures such that, $\forall n\in \mathbb{N}$, $M_{n}$
contains a $\varphi $-chain of length $n$, then there is an ultrafilter $%
\mathcal{U}$ over $\mathbb{N}$ such that $\prod_{n}^{\mathcal{U}}M_{n}$
contains an $\left( \aleph _{1},\varphi \right) $-skeleton like $\varphi $-chain of order type $I$.
\end{lemma}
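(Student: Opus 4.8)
The plan is to turn the statement into a combinatorial construction of an ultrafilter and then read the conclusion off from {\L}o\'s's theorem for metric structures. First I would fix, for each $n$, a $\varphi$-chain $\bar c^{\,n}_{0}\prec_{\varphi}\cdots\prec_{\varphi}\bar c^{\,n}_{n-1}$ witnessing the hypothesis, so that $M_{n}$ carries a linear order $L_{n}=(\{0,\dots,n-1\},<_{n})$ of size $n$ with $\varphi^{M_{n}}(\bar c^{\,n}_{s},\bar c^{\,n}_{t})=0$ for $s<t$ and $=1$ for $s>t$. Every candidate element of the chain in $\prod_{n}^{\mathcal U}M_{n}$ will have the form $\bar a_{i}=[(\bar c^{\,n}_{h_{i}(n)})_{n}]_{\mathcal U}$ for some $h_{i}\colon\mathbb N\to\mathbb N$ with $h_{i}(n)\in L_{n}$, and by {\L}o\'s's theorem $\bar a_{i}\prec_{\varphi}\bar a_{j}$ holds in the ultraproduct exactly when $\{n:h_{i}(n)<_{n}h_{j}(n)\}\in\mathcal U$. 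So the problem reduces to producing a family $(h_{i})_{i\in I}$ and an ultrafilter $\mathcal U$ for which the induced order on $\{\bar a_{i}\}$ is isomorphic to $I$ and for which the skeleton-like condition holds.

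For the order-embedding part I would use an independent family of functions indexed by $I$: since $|L_{n}|=n\to\infty$ while $|I|=\mathfrak c$, one can choose the $h_{i}$ so that every finite, order-consistent system of constraints $h_{i_{1}}(n)<_{n}h_{j_{1}}(n),\dots,h_{i_{m}}(n)<_{n}h_{j_{m}}(n)$ (for $i_{k}<j_{k}$ in $I$) is satisfied by infinitely many $n$. The sets $\{n:h_{i}(n)<_{n}h_{j}(n)\}$ for $i<j$ then generate a proper filter; any ultrafilter $\mathcal U$ extending this filter makes $i\mapsto\bar a_{i}$ an order isomorphism of $I$ onto a $\varphi$-chain $\mathcal C\subseteq\prod_{n}^{\mathcal U}M_{n}$, and these limit computations are routine. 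The remaining freedom in the choice of $\mathcal U$ is what I would exploit to secure the skeleton-like condition.

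The hard part, and the place where $\mathcal U$ must be built with care rather than chosen arbitrarily, is the skeleton-like condition. For an arbitrary $\bar a=[(\bar a(n))_{n}]_{\mathcal U}\in M^{k}$ the relevant data are the two families of $[0,1]$-valued functions $t\mapsto\varphi^{M_{n}}(\bar a(n),\bar c^{\,n}_{t})$ and $t\mapsto\varphi^{M_{n}}(\bar c^{\,n}_{t},\bar a(n))$ on the finite orders $L_{n}$, since $\varphi^{M}(\bar a,\bar a_{i})=\lim_{\mathcal U}\varphi^{M_{n}}(\bar a(n),\bar c^{\,n}_{h_{i}(n)})$ and symmetrically. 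I would try to show that, along $\mathcal C$, each of the value maps $i\mapsto\varphi^{M}(\bar a,\bar a_{i})$ and $i\mapsto\varphi^{M}(\bar a_{i},\bar a)$ has at most countably many jumps, and then take $\mathcal C_{\bar a}$ to be the countable set of jump points. The obstacle is that this must hold \emph{simultaneously} for all $2^{\mathfrak c}$ possible $\bar a$; the way to force it is to build $\mathcal U$ by a transfinite recursion (equivalently, to demand enough independence of the underlying family) so that for every sequence $(\bar a(n))_{n}$ the cut it induces on $I$ is already decided by countably much information, and hence no single element of the ultraproduct can separate a nonseparated pair $\bar b\preceq_{\varphi}\bar c$ unless some point of $\mathcal C_{\bar a}$ lies between them. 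I expect this countable-control step to be the technical heart of the argument (it is the content of Proposition 6.6 of \cite{Farah-Shelah}); once it is in place, the displayed equalities defining $(\aleph_{1},\varphi)$-skeleton likeness hold by construction, and the lemma follows.
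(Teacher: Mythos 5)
Your first two paragraphs are sound and in fact follow the same route as the source of this lemma: the reduction via \L o\'s's theorem to choosing functions $h_{i}$ with values in the finite chains, and the use of an independent family so that every finite order-consistent system of constraints is satisfied on sets generating a proper filter, are exactly how the construction begins in \cite{Farah-Shelah}. (Note that the paper itself does not reproduce this argument at all: it states the lemma and cites it as Proposition 6.6 of \cite{Farah-Shelah}.)

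The gap is in your third paragraph, which is where the entire technical content of the lemma lives, and nothing there is actually proved. The claim that each value map $i\mapsto\varphi^{M}\left( \bar{a},\bar{a}_{i}\right) $ has ``at most countably many jumps'' is not a step toward the skeleton-like condition; it \emph{is} the skeleton-like condition restated verbatim, with $\mathcal{C}_{\bar{a}}$ the set of jump points. Likewise, ``build $\mathcal{U}$ by a transfinite recursion so that for every sequence the cut it induces on $I$ is already decided by countably much information'' describes the desired outcome without supplying any mechanism for achieving it. Worse, you then defer this step to Proposition 6.6 of \cite{Farah-Shelah} --- but that proposition \emph{is} the lemma you were asked to prove, so the appeal is circular. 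What is missing is the actual construction: a transfinite recursion of length $\mathfrak{c}$ in which the independent family serves as a reservoir; at each stage one handles a single candidate element $\bar{a}$ (i.e., a sequence $\left( \bar{a}\left( n\right) \right) _{n}$), extending the current filter and consuming only countably many members of the independent family while preserving the independence of the rest, so that the eventual values $\varphi ^{M}\left( \bar{a},\bar{a}_{i}\right) $ and $\varphi ^{M}\left( \bar{a}_{i},\bar{a}\right) $ are frozen except at a countable set of indices; and a verification that the requirements can be enumerated and met within $\mathfrak{c}$ steps. None of this bookkeeping --- which is the heart of Farah and Shelah's argument --- appears in your proposal, so as written it is a correct reduction plus a citation of the result itself, not a proof.
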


The same fact for discrete structures and formulas can be inferred from this lemma
as a particular case. The following definition, taken from \cite{Farah-Hart-Sherman1}, is of key importance for the proof of the main result.
\begin{definition}
\label{Definition: order property}
A sequence $\left( M_{n}\right) _{n\in \mathbb{N}}$ of $\mathcal{L}$-structures has the \emph{order property} witnessed by the $\mathcal{L}$-formula $\varphi \left( \bar{x},\bar{y}\right) $ if, for every $\varepsilon >0$ and $l\in \mathbb{N}$, $M_n$ contains a $\left( \varphi ,\varepsilon \right) $-chain of length $l$ for all but finitely many $n\in \mathbb{N}$.
\end{definition}

The assumption that a sequence $\left( M_{n}\right) _{n\in \mathbb{N}}$ of $\mathcal{L}$-structures has the order property is slightly weaker than the assumption on $\left( M_{n}\right) _{n\in \mathbb{N}}$ in Lemma~\ref{Lemma: construction ultrafilters}. Nonetheless, if a sequence $\left( M_{n}\right) _{n\in \mathbb{N}}$ has the order property, then the same conclusion as in 
Lemma~\ref{Lemma: construction ultrafilters} holds, namely, for every linear order $I$ of cardinality $\mathfrak{c}$, there is an ultrafilter $\mathcal{U}$ over $\mathbb{N}$ such that $\prod_{n}^{\mathcal{U}}M_{n}$ contains an $\left( \aleph _{1},\varphi \right) $-skeleton like $\varphi $-chain of order type $I$. This is easily seen via a suitable modification of the proof of Proposition~6.6 in \cite{Farah-Shelah}.

The connection between the number of non-isomorphic ultraproducts and the $%
\left( \aleph _{1},\varphi \right) $-skeleton like $\varphi $-chains is
given by the following lemma. It is proved in \cite{Farah-Shelah} (Proposition~3.14) in the setting of usual first order logic. As pointed out in Section~6.5 of the same paper, the proof can be easily adapted to the metric case.

\begin{lemma}
\label{Lemma: class of models}If CH fails, $\varphi \left( \bar{x},\bar{y}%
\right) $ is an $\mathcal{L}$-formula and $\mathcal{K}$ is a class of $%
\mathcal{L}$-structures such that, for every linear order $I$ of cardinality 
$\mathfrak{c}$, there is an element $M$ of $\mathcal{K}$ such that $M$
contains an $\left( \aleph _{1},\varphi \right) $-skeleton like $\varphi $-chain, then there are $2^{\mathfrak{c}}$ many pairwise non-isometrically
isomorphic $\mathcal{L}$-structures in $\mathcal{K}$.
\end{lemma}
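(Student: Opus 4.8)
The plan is to attach to each structure in $\mathcal{K}$ an order-theoretic invariant read off from its skeleton-like chain, to produce $2^{\mathfrak{c}}$ linear orders whose invariants are pairwise distinct, and to show that an isometric isomorphism must carry one invariant to the other; the corresponding structures are then forced to be pairwise non-isomorphic.

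First I would record the combinatorial input. Since CH fails we have $\aleph_1<\mathfrak{c}$, and this gives enough room to build a family $\{I_\alpha:\alpha<2^{\mathfrak{c}}\}$ of linear orders, each of cardinality $\mathfrak{c}$, whose sets of Dedekind cuts of cofinality $\aleph_1$ on both sides are pairwise non-isomorphic; writing $\mathrm{inv}(I)$ for this ``$(\aleph_1,\aleph_1)$-cut'' invariant, the $\mathrm{inv}(I_\alpha)$ are pairwise distinct. (This is purely set-theoretic: one places at $\mathfrak{c}$ many independent positions a gadget that either creates such a cut or not, thereby encoding an arbitrary subset of a set of size $\mathfrak{c}$.) For each $\alpha$, the hypothesis on $\mathcal{K}$ supplies an $M_\alpha\in\mathcal{K}$ containing an $(\aleph_1,\varphi)$-skeleton like $\varphi$-chain $\mathcal{C}_\alpha$ of order type $I_\alpha$.

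The heart of the argument is the claim that this invariant is intrinsic to the structure: if a single $M$ contains two $(\aleph_1,\varphi)$-skeleton like $\varphi$-chains $\mathcal{C}$ and $\mathcal{D}$, of order types $I$ and $J$, then $\mathrm{inv}(I)=\mathrm{inv}(J)$. To see this I would exploit the defining property of skeleton-likeness on both sides. Each element of $\mathcal{D}$ is a tuple $\bar{a}\in M^{k}$, so there is a countable $\mathcal{C}_{\bar{a}}\subseteq\mathcal{C}$ on whose complementary intervals the values $\varphi^{M}(\bar{a},\cdot)$ and $\varphi^{M}(\cdot,\bar{a})$ are constant along $\mathcal{C}$; the place where these values jump locates $\bar{a}$ at a cut of $\mathcal{C}$ approached by $\mathcal{C}_{\bar{a}}$, hence having countable cofinality on at least one side. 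This produces an order-preserving map from $\mathcal{D}$ into the cuts of $\mathcal{C}$, and symmetrically one from $\mathcal{C}$ into the cuts of $\mathcal{D}$. Because each map is governed by a countable reference set, it can neither create nor destroy a cut that has uncountable cofinality on both sides; thus the cuts of cofinality $\aleph_1$ on both sides of $\mathcal{C}$ correspond exactly to those of $\mathcal{D}$, and $\mathrm{inv}(I)=\mathrm{inv}(J)$.

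Finally I would assemble the pieces. An isometric isomorphism $f:M_\alpha\to M_\beta$ preserves every $\mathcal{L}$-formula value, in particular $\varphi^{M}$, hence the relation $\prec_{\varphi}$, so it carries $\mathcal{C}_\alpha$ to an $(\aleph_1,\varphi)$-skeleton like $\varphi$-chain in $M_\beta$ of order type $I_\alpha$. Since $M_\beta$ also contains such a chain of type $I_\beta$, the intrinsic-invariant claim forces $\mathrm{inv}(I_\alpha)=\mathrm{inv}(I_\beta)$, which fails for $\alpha\neq\beta$. Hence the $M_\alpha$ are pairwise non-isometrically isomorphic, yielding $2^{\mathfrak{c}}$ isomorphism classes (and no more, since every member has cardinality at most $\mathfrak{c}$). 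I expect the main obstacle to be the intrinsic-invariant claim: one must verify carefully that skeleton-likeness genuinely confines every jump to a countably-approachable cut, so that the two chains sitting inside one model really do have matching uncountable-cofinality cut structure. By comparison, the set-theoretic construction of the $2^{\mathfrak{c}}$ orders and the preservation of $\prec_{\varphi}$ under isomorphism are routine.
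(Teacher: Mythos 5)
The paper does not actually prove this lemma: it is quoted from Farah--Shelah (Proposition 3.14 there, in the first-order setting, together with their remark in Section 6.5 that the proof adapts to metric structures), so your proposal is in effect an attempt to reprove that imported result. Your scaffolding (code data into linear orders by their $(\aleph_1,\aleph_1)$-cuts; an isometric isomorphism carries skeleton-like chains to skeleton-like chains of the same order type) is the Farah--Shelah strategy in outline, but the step you yourself identify as the heart of the argument --- the ``intrinsic-invariant claim'' that two $(\aleph_1,\varphi)$-skeleton like chains in one structure must have the same cut invariant --- is false, and the mechanism you propose for it does not exist. Nothing in the definition of skeleton-likeness forces the two chains to interact: if $\bar a\in\mathcal{D}$ satisfies $\varphi^{M}(\bar a,\bar c)=\varphi^{M}(\bar c,\bar a)=1/2$ for every $\bar c\in\mathcal{C}$, then the skeleton-like condition for $\bar a$ over $\mathcal{C}$ holds with $\mathcal{C}_{\bar a}=\varnothing$, there is no ``jump,'' and your map from $\mathcal{D}$ into the cuts of $\mathcal{C}$ is simply undefined. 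Concretely, let $M$ be the disjoint union of two linear orders $I$ and $J$ of cardinality $\mathfrak{c}$, with $\varphi(x,y)$ evaluating to $0$ when $x<y$ inside one of the two orders and to $1$ otherwise (a discrete structure, hence a metric structure in the paper's sense; the relation can be coded by a function symbol if one insists on the paper's convention of no relation symbols besides $d$). Then both $I$ and $J$ are $(\aleph_1,\varphi)$-skeleton like chains in $M$ (take $\mathcal{C}_{a}=\{a\}$ for $a$ in the same order and $\mathcal{C}_{a}=\varnothing$ for $a$ in the other one), and $I$, $J$ were arbitrary. Even interacting chains refute the claim: if a chain of type $\omega_1+\omega_1^{*}$ (one $(\aleph_1,\aleph_1)$-cut) is skeleton-like, so is its initial segment of type $\omega_1$ (no such cut).

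A symptom of the same problem: your argument uses no bound whatsoever on the cardinality of the structures in $\mathcal{K}$, so it would equally ``prove'' the conclusion for $\mathcal{K}=\{M^{*}\}$, where $M^{*}$ is the disjoint union, over all order types $I$ of cardinality $\mathfrak{c}$, of chains of type $I$ built as above; this $\mathcal{K}$ satisfies the hypothesis yet is a singleton. (Likewise, $\neg$CH does no real work where you invoke it: $2^{\mathfrak{c}}$ orders of size $\mathfrak{c}$ with pairwise distinct cut patterns exist under CH as well, so the failure of CH must enter in the model-theoretic comparison, exactly the part that is missing.) What skeleton-likeness genuinely yields is only the one-sided fact that no tuple $\bar b\in M^{k}$ fills an $(\aleph_1,\aleph_1)$-cut of a skeleton-like chain: by uncountable cofinality on both sides of the cut one finds $\bar a_{s}\prec_{\varphi}\bar b\prec_{\varphi}\bar a_{t}$ with no element of the countable set $\mathcal{C}_{\bar b}$ between $\bar a_{s}$ and $\bar a_{t}$, whence $\varphi(\bar b,\bar a_{s})=\varphi(\bar b,\bar a_{t})$, contradicting $\varphi(\bar b,\bar a_{s})=1$ and $\varphi(\bar b,\bar a_{t})=0$. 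Converting this non-realization property into a genuine isomorphism invariant --- with invariants compared only modulo an equivalence absorbing the ambiguities above, linear orders constructed to remain distinguishable modulo that equivalence, a pigeonhole argument finer than ``two orders land on one model,'' and with both $\neg$CH and the size of the models entering essentially --- is the actual content of Farah--Shelah's Section 3, and it is precisely why the paper cites the result rather than reproving it.
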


The following lemma is useful when, as in our case, one is interested in
counting the number of metric ultraproducts up to algebraic isomorphism.

\begin{lemma}
\label{Lemma: metric and discrete skeleton}Suppose that $\varphi \left( \bar{%
x},\bar{y}\right) $ is an $\mathcal{L}$-formula and $\psi \left( \bar{x},%
\bar{y}\right) $ is a discrete $\mathcal{L}$-formula such that, for every $%
\mathcal{L}$-structure $M$, for every $\bar{a},\bar{b}\in M^{k}$, $\psi
\left( \bar{a},\bar{b}\right) $ holds in $M_{alg}$ if and only if $\varphi
^{M}\left( \bar{a},\bar{b}\right) =0$. If $M$ is an $\mathcal{L}$-structure
and $\mathcal{C}$ is an $\left( \aleph _{1},\varphi \right) $-skeleton like $%
\varphi $-chain in $M$, then $\mathcal{C}$ is an $\left( \aleph _{1},\psi
\right) $-skeleton like $\psi $-chain of the same order type in $M_{alg}$.
\end{lemma}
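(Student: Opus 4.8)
The plan is to show that the relations $\prec_{\varphi}$ on $M$ and $\prec_{\psi}$ on $M_{alg}$ restrict to the \emph{same} linear order on $\mathcal{C}$, and that this forces the two defining clauses of the skeleton-like property to transfer almost verbatim. First I would unwind the hypothesis in terms of values. Since a discrete formula takes only the values $0$ and $1$, the assertion that $\psi(\bar{a},\bar{b})$ holds in $M_{alg}$ is equivalent to $\psi^{M_{alg}}(\bar{a},\bar{b})=0$ and its failure to $\psi^{M_{alg}}(\bar{a},\bar{b})=1$; so the hypothesis reads $\psi^{M_{alg}}(\bar{a},\bar{b})=0 \Leftrightarrow \varphi^{M}(\bar{a},\bar{b})=0$. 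I would then verify that $\bar{a}\prec_{\varphi}\bar{b}$ in $M$ implies $\bar{a}\prec_{\psi}\bar{b}$ in $M_{alg}$ for every pair: the clause $\varphi^{M}(\bar{a},\bar{b})\le 0$ forces $\varphi^{M}(\bar{a},\bar{b})=0$, hence $\psi^{M_{alg}}(\bar{a},\bar{b})=0$, while $\varphi^{M}(\bar{b},\bar{a})\ge 1$ forces $\varphi^{M}(\bar{b},\bar{a})\ne 0$, hence $\psi^{M_{alg}}(\bar{b},\bar{a})=1\ge 1$.

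Applying this to the two $\prec_{\varphi}$-comparable entries of an arbitrary pair from $\mathcal{C}$ shows that $\mathcal{C}$ is a $\psi$-chain in $M_{alg}$. Moreover the two orders cannot disagree on $\mathcal{C}$: if $\bar{b}\prec_{\varphi}\bar{c}$ then $\varphi^{M}(\bar{c},\bar{b})=1\ne 0$, so $\psi^{M_{alg}}(\bar{c},\bar{b})=1$, which rules out $\bar{c}\prec_{\psi}\bar{b}$. Hence $\prec_{\psi}$ and $\prec_{\varphi}$, together with their reflexive closures $\preceq_{\psi}$ and $\preceq_{\varphi}$, coincide on $\mathcal{C}$, which immediately gives that $\mathcal{C}$ has the same order type in $M_{alg}$ as in $M$.

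Second, for the skeleton-like clause I would show that the same countable witness set works. Fix $\bar{a}$ and let $\mathcal{C}_{\bar{a}}\subset\mathcal{C}$ be the countable set witnessing $\left(\aleph_{1},\varphi\right)$-skeleton-likeness of $\mathcal{C}$ in $M$; I claim $\mathcal{C}_{\bar{a}}$ witnesses $\left(\aleph_{1},\psi\right)$-skeleton-likeness in $M_{alg}$. Given $\bar{b},\bar{c}\in\mathcal{C}$ with $\{x\in\mathcal{C}_{\bar{a}} : \bar{b}\preceq_{\psi}x\preceq_{\psi}\bar{c}\}=\varnothing$, the coincidence of the two orders on $\mathcal{C}$ (all of $\bar{b},\bar{c},x$ lie in $\mathcal{C}$) lets me rewrite this as the corresponding empty-gap condition for $\preceq_{\varphi}$ in $M$. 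The $\varphi$-hypothesis then yields $\varphi^{M}(\bar{a},\bar{b})=\varphi^{M}(\bar{a},\bar{c})$ and $\varphi^{M}(\bar{b},\bar{a})=\varphi^{M}(\bar{c},\bar{a})$. Finally I would convert these equalities of real values into equalities of $\psi$-values: since $\psi^{M_{alg}}(\bar{a},\bar{b})$ is determined by whether $\varphi^{M}(\bar{a},\bar{b})$ vanishes, two equal $\varphi$-values are either both zero or both nonzero, and so produce equal $\psi$-values; this gives $\psi^{M_{alg}}(\bar{a},\bar{b})=\psi^{M_{alg}}(\bar{a},\bar{c})$ and, symmetrically, $\psi^{M_{alg}}(\bar{b},\bar{a})=\psi^{M_{alg}}(\bar{c},\bar{a})$.

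The only delicate points are bookkeeping. One must use that $\varphi$ is $[0,1]$-valued to upgrade the one-sided clauses $\varphi^{M}(\cdot,\cdot)\le 0$ and $\ge 1$ occurring in $\prec_{\varphi}$ to the exact values $0$ and $1$. One must also note that, although the hypothesis controls only whether $\varphi^{M}$ vanishes and not its precise nonzero value, this is all that is ever used: every comparison in Step 1 and every value-equality in Step 2 depends on $\varphi^{M}$ solely through the dichotomy vanishing/nonvanishing, which is exactly the information carried over to $\psi^{M_{alg}}$. I expect the remaining verification to be entirely routine once the coincidence of $\prec_{\varphi}$ and $\prec_{\psi}$ on $\mathcal{C}$ has been recorded.
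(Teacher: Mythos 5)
Your proposal is correct and takes essentially the same route as the paper: both show that $\bar{a}\prec_{\varphi}\bar{b}$ implies $\bar{a}\prec_{\psi}\bar{b}$ (so the chain and its order type transfer), reuse the same countable witness sets $\mathcal{C}_{\bar{a}}$, transfer the empty-gap hypothesis from $\preceq_{\psi}$ to $\preceq_{\varphi}$, and convert the resulting equalities of $\varphi$-values into equivalences of $\psi$-values via the vanishing dichotomy. Your extra observation that $\prec_{\varphi}$ and $\prec_{\psi}$ actually coincide on $\mathcal{C}$ is a mild strengthening; the paper gets by with the one-directional inclusion alone, which already makes emptiness of the $\psi$-gap set imply emptiness of the $\varphi$-gap set.
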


\begin{proof}
The hypothesis implies that $\prec _{\psi }$ in $M_{alg}^{k}$ refines $\prec
_{\varphi }$ in $M^{k}$. Thus, a $\varphi $-chain in $M$ is also a $\psi $-chain in $M_{alg}$ of the same order type. Moreover, suppose $\bar{a}\in
M^{k}$ and $\mathcal{C}_{\bar{a}}$ is as in the definition of $\left( \aleph
_{1},\varphi \right) $-skeleton like. If $\bar{b},\bar{c}\in \mathcal{C}$
are such that%
\begin{equation*}
\left\{ x\in \mathcal{C}_{\bar{a}}\left\vert \,\bar{b}\preceq _{\psi
}x\preceq _{\psi }\bar{c}\right. \right\} =\varnothing \text{,}
\end{equation*}%
then also%
\begin{equation*}
\left\{ x\in \mathcal{C}_{\bar{a}}\left\vert \,\bar{b}\preceq _{\varphi
}x\preceq _{\varphi }\bar{c}\right. \right\} =\varnothing \text{.}
\end{equation*}%
Hence,%
\begin{equation*}
\varphi ^{M}\left( \bar{a},\bar{b}\right) =\varphi ^{M}\left( \bar{a},\bar{c}%
\right) \quad \text{and}\quad \varphi ^{M}\left( \bar{b},\bar{a}\right)
=\varphi ^{M}\left( \bar{c},\bar{a}\right) \text{.}
\end{equation*}%
Since, by hypothesis, $M_{alg}\models \psi ^{M}\left( \bar{a},\bar{b}\right) 
$ is equivalent to $\varphi ^{M}\left( \bar{a},\bar{b}\right) =0$ and $%
M_{alg}\models \psi ^{M}\left( \bar{a},\bar{c}\right) $ is equivalent to $%
\varphi ^{M}\left( \bar{a},\bar{c}\right) =0$, one gets%
\begin{equation*}
M_{alg}\models \left( \psi ^{M}\left( \bar{a},\bar{b}\right) \leftrightarrow
\psi ^{M}\left( \bar{a},\bar{c}\right) \right) \text{.}
\end{equation*}%
In the same way,%
\begin{equation*}
M_{alg}\models \left( \psi ^{M}\left( \bar{b},\bar{a}\right) \leftrightarrow
\psi ^{M}\left( \bar{c},\bar{a}\right) \right)
\end{equation*}%
is deduced. Thus, $\mathcal{C}$ is an $\left( \aleph _{1},\psi \right) $-skeleton like $\psi $-chain in $M_{alg}$.
\end{proof}

\begin{remark}
\label{Remark: formulas}If $s,t$ are $\mathcal{L}$-terms and $q:\left[ 0,1%
\right] \rightarrow \left[ 0,1\right] $ is a continuous function such that $%
q\left( x\right) =0$ iff $x=0$, the formulas 
\begin{equation*}
\varphi \left( s,t\right) =\text{\textquotedblleft }q\left( d\left(
s,t\right) \right) \text{"}\quad \text{and}\quad \psi \left( s,t\right) =%
\text{\textquotedblleft }s=t\text{"}
\end{equation*}%
satisfy the hypothesis of the previous lemma.
\end{remark}

\begin{proposition}
\label{Proposition: algebraically nonisomorphic metric ultraproducts}Assume
that CH fails. If $\varphi \left( \bar{x},\bar{y}\right) $ and $\psi \left( 
\bar{x},\bar{y}\right) $ are as in Lem-\linebreak ma~\ref{Lemma: metric and discrete
skeleton}, $\left( M_{n}\right) _{n\in \mathbb{N}}$ is a sequence of $%
\mathcal{L}$-structures with the order property witnessed by $\varphi $, and $%
\left( k_{n}\right) _{n\in \mathbb{N}}$ is a strictly increasing sequence of
natural numbers, then the family of $\mathcal{L}$-structures

\begin{equation*}
\left\{ \left( \prod\nolimits_{n}^{\mathcal{U}}M_{k_{n}}\right) _{alg}\left|
\,\mathcal{U}\text{ is an ultrafilter over }\mathbb{N}\right. \right\} ,
\end{equation*}%
where $\prod\nolimits_{n}^{\mathcal{U}}M_{k_{n}}$ denotes the metric
ultraproduct, contains $2^{\mathfrak{c}}$ many pairwise non-isomorphic
elements. In other words, there are $2^{\mathfrak{c}}$ many metric
ultraproducts of the sequence $\left( M_{k_{n}}\right) _{n\in \mathbb{N}}$
up to algebraic isomorphism, i.e., up to bijections preserving all the
function symbols but not necessarily the distance.
\end{proposition}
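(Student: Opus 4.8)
The plan is to chain together the lemmas established in this section, verifying at each step that their hypotheses are met. The target is the existence of $2^{\mathfrak{c}}$ many algebraically non-isomorphic structures among the $\left(\prod_n^{\mathcal{U}} M_{k_n}\right)_{alg}$.

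First I would fix an arbitrary linear order $I$ of cardinality $\mathfrak{c}$. Since the subsequence $\left(M_{k_n}\right)_{n\in\mathbb{N}}$ inherits the order property witnessed by $\varphi$ from $\left(M_n\right)_{n\in\mathbb{N}}$ (the order property is preserved under passing to subsequences, as the defining condition is about cofinitely many indices), the remark following Lemma~\ref{Lemma: construction ultrafilters} applies: there is an ultrafilter $\mathcal{U}_I$ over $\mathbb{N}$ such that the metric ultraproduct $M^{I} := \prod_n^{\mathcal{U}_I} M_{k_n}$ contains an $\left(\aleph_1,\varphi\right)$-skeleton like $\varphi$-chain of order type $I$.

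Next I would apply Lemma~\ref{Lemma: metric and discrete skeleton} to transfer this chain to the algebraic reduct. Since $\varphi$ and $\psi$ satisfy the hypothesis of that lemma by assumption, the same chain $\mathcal{C}$ is an $\left(\aleph_1,\psi\right)$-skeleton like $\psi$-chain of order type $I$ in $\left(M^{I}\right)_{alg}$. Thus, letting $\mathcal{K}$ be the class
\begin{equation*}
\mathcal{K}=\left\{ \left(\prod\nolimits_n^{\mathcal{U}} M_{k_n}\right)_{alg} \,\middle|\, \mathcal{U}\text{ an ultrafilter over }\mathbb{N}\right\},
\end{equation*}
I have shown that for every linear order $I$ of cardinality $\mathfrak{c}$ there is an element of $\mathcal{K}$ containing an $\left(\aleph_1,\psi\right)$-skeleton like $\psi$-chain. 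Since CH fails, Lemma~\ref{Lemma: class of models}, applied with the discrete formula $\psi$ in place of $\varphi$ and the class $\mathcal{K}$, yields $2^{\mathfrak{c}}$ many pairwise non-isometrically isomorphic structures in $\mathcal{K}$. The final observation is that each structure in $\mathcal{K}$ carries the discrete metric, so for these structures isometric isomorphism coincides with algebraic isomorphism (a bijection preserving the discrete metric is the same as a bijection, and the function symbols are preserved by definition of isomorphism); hence the $2^{\mathfrak{c}}$ structures are pairwise non-isomorphic as algebraic structures, which is exactly the claim.

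The step that requires the most care is the first one, justifying that the order property for the full sequence transfers to the conclusion of Lemma~\ref{Lemma: construction ultrafilters} for the subsequence. The excerpt asserts this via "a suitable modification of the proof of Proposition 6.6 in \cite{Farah-Shelah}," so I would simply invoke that remark rather than reproduce the ultrafilter construction; the only genuinely new verification is the routine observation that the order property, being a statement about all but finitely many indices, is inherited by any subsequence indexed along a strictly increasing $\left(k_n\right)_{n\in\mathbb{N}}$. Everything else is a clean composition of the three cited lemmas together with the elementary identification of isometric and algebraic isomorphism for discrete structures.
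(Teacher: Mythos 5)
Your proposal is correct and takes essentially the same approach as the paper: the paper's proof likewise obtains, for each linear order $I$ of cardinality $\mathfrak{c}$, an ultrafilter whose ultraproduct contains an $\left( \aleph _{1},\varphi \right)$-skeleton like $\varphi$-chain of order type $I$, transfers it to the algebraic reduct via Lemma \ref{Lemma: metric and discrete skeleton}, and concludes with Lemma \ref{Lemma: class of models} (the only cosmetic difference being that the paper reindexes so that $k_{n}=n$ rather than working with the subsequence directly). If anything, you are slightly more careful on two points the paper glosses over: you invoke the remark extending Lemma \ref{Lemma: construction ultrafilters} to sequences with the order property (the paper cites the lemma itself, whose literal hypothesis of $\varphi$-chains of length $n$ is not what the order property supplies), and you make explicit that for discrete reducts non-isometric-isomorphism coincides with algebraic non-isomorphism.
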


\begin{proof}
Since every subsequence of $\left( M_{n}\right) _{n\in \mathbb{N}}$ has the
order property witnessed by $\varphi $, there is no loss of generality
in assuming $k_{n}=n$ for every $n\in \mathbb{N}$. By Lemma~\ref{Lemma:
construction ultrafilters}, for every linear order $I$ of cardinality $%
\mathfrak{c}$, there is an ultrafilter $\mathcal{U}$ such that $\prod_{n}^{%
\mathcal{U}}M_{n}$ has an $\left( \aleph _{1},\varphi \right) $-skeleton
like $\varphi $-chain $\mathcal{C}$ of order type $I$. By Lemma~\ref{Lemma:
metric and discrete skeleton}, $\mathcal{C}$ is also an $\left( \aleph
_{1},\psi \right) $-skeleton like $\psi $-chain of the same order type in $%
\left( \prod\nolimits_{n}^{\mathcal{U}}M_{n}\right) _{alg}$. Since this is
true for every linear order $I$ of cardinality $\mathfrak{c}$, the
conclusion follows from Lemma~\ref{Lemma: class of models}.
\end{proof}

Observe that in this result $2^{\mathfrak{c}}$ is the maximum number
possible, because it is the number of ultrafilters over $\mathbb{N}$. The
following result is an immediate consequence of Proposition~\ref{Proposition: algebraically
nonisomorphic metric ultraproducts} and Remark \ref{Remark: formulas}.

\begin{corollary}
\label{Corollary: algebraically non-isomorphic metric ultraproducts}If $%
\left( M_{n}\right) _{n\in \mathbb{N}}$ is a sequence of $\mathcal{L}$-structures with the order property witnessed by the $\mathcal{L}$-formula $%
q\left( d\left( s,t\right) \right) $, where $s$ and $t$ are terms and $q:%
\left[ 0,1\right] \rightarrow \left[ 0,1\right] $ is a continuous function
such that $q\left( x\right) =0$ iff $x=0$, then the conclusion of
Proposition~\ref{Proposition: algebraically nonisomorphic metric
ultraproducts} holds.
\end{corollary}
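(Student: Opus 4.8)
The plan is to read the corollary as a direct specialization of Proposition \ref{Proposition: algebraically nonisomorphic metric ultraproducts}, with the required pair of formulas supplied by Remark \ref{Remark: formulas}. Concretely, let $s$ and $t$ be the $\mathcal{L}$-terms and $q$ the continuous function appearing in the hypothesis, and set
\begin{equation*}
\varphi\left(\bar{x},\bar{y}\right)=q\left(d\left(s,t\right)\right)\quad\text{and}\quad\psi\left(\bar{x},\bar{y}\right)=\text{\textquotedblleft }s=t\text{"}.
\end{equation*}
Since $s$ and $t$ are built from the variables in $\bar{x},\bar{y}$, these are genuine $\mathcal{L}$-formulas, with $\psi$ a discrete formula.

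The first step is to verify that this pair meets the hypothesis of Lemma \ref{Lemma: metric and discrete skeleton}. For an arbitrary $\mathcal{L}$-structure $M$ and $\bar{a},\bar{b}\in M^{k}$, writing $s^{M}$ and $t^{M}$ for the interpretations of the terms at $\left(\bar{a},\bar{b}\right)$, one has $\varphi^{M}\left(\bar{a},\bar{b}\right)=q\left(d\left(s^{M},t^{M}\right)\right)$. Because $q\left(x\right)=0$ if and only if $x=0$, and because $d$ is a metric, this value equals $0$ exactly when $s^{M}=t^{M}$, that is, exactly when $\psi\left(\bar{a},\bar{b}\right)$ holds in $M_{alg}$. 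This is the equivalence demanded by Lemma \ref{Lemma: metric and discrete skeleton}, and it is precisely the assertion recorded in Remark \ref{Remark: formulas}.

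The second step is then to invoke Proposition \ref{Proposition: algebraically nonisomorphic metric ultraproducts}. By hypothesis the sequence $\left(M_{n}\right)_{n\in\mathbb{N}}$ has the order property witnessed by $\varphi=q\left(d\left(s,t\right)\right)$, and the first step has shown that $\varphi$ together with $\psi$ are as in Lemma \ref{Lemma: metric and discrete skeleton}. All hypotheses of the proposition are therefore satisfied, and its conclusion gives exactly the stated $2^{\mathfrak{c}}$ many metric ultraproducts of $\left(M_{k_{n}}\right)_{n\in\mathbb{N}}$ up to algebraic isomorphism. I expect no genuine obstacle here: the corollary is a pure specialization, and the only point requiring any care is the elementary observation in the first step that the condition \emph{$q$ vanishes only at $0$} is what transfers the vanishing of $\varphi$ into literal equality of the interpreted terms, thereby matching the metric formula $\varphi$ with the discrete formula $\psi$.
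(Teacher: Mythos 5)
Your proposal is correct and is exactly the paper's argument: the corollary is stated there as an immediate consequence of Proposition \ref{Proposition: algebraically nonisomorphic metric ultraproducts} together with Remark \ref{Remark: formulas}, which records precisely your pairing of $\varphi = q\left(d\left(s,t\right)\right)$ with the discrete formula $\psi = $ ``$s=t$''. Your first step simply makes explicit the verification that the paper leaves implicit in the remark, namely that $q$ vanishing only at $0$ yields the required equivalence in Lemma \ref{Lemma: metric and discrete skeleton}.
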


Now, in order to prove Theorem~\ref{Theorem: main theorem} it is enough to
show that the sequences $\left( S_{n}\right) _{n\in \mathbb{N}}$ and $\left(
U_{n}\right) _{n\in \mathbb{N}}$ have the order property, witnessed by a
formula $\varphi $ as in Corollary \ref{Corollary: algebraically
non-isomorphic metric ultraproducts}. Let $\eta \left(
x_{1},x_{2},y_{1},y_{2}\right) $ be the metric formula defined by%
\begin{equation*}
\min \left\{ 2d\left( \left[ x_{1},y_{2}\right] ,e\right) ,1\right\} \text{.}
\end{equation*}%
Fix $l\in \mathbb{N}\ $and $n\geq 3^{l}$, and consider the sequences $\left(
\Sigma _{n,i}\right) _{i=1}^{l}$ and $\left( T_{n,i}\right) _{i=1}^{l}$ in $%
S_{n}$ defined in Lemma~\ref{Lemma: order property symmetric groups} and the
sequences $\left( b_{n,i}\right) _{i=1}^{l}$ and $\left( c_{n,i}\right)
_{i=1}^{l}$ in $U_{n}$ defined in Lemma~\ref{Lemma: order property
for unitary groups}. It is not difficult to infer from Lemma~\ref{Lemma:
order property symmetric groups} and Lemma~\ref{Lemma: order
property for unitary groups} that
\begin{equation*}
\left( \left( \Sigma _{n,i},T_{n,i}\right) \right) _{i=1}^{l}\quad \text{and}%
\quad \left( \left( b_{n,i},c_{n,i}\right) \right) _{i=1}^{l}
\end{equation*}%
are $\eta $-chains of length $l$ in $S_{n}$ and $U_{n}$ respectively. An
application of Corollary \ref{Corollary: algebraically non-isomorphic metric
ultraproducts} concludes the proof of Theorem~\ref{Theorem: main theorem}.

\section{Ranked regular rings}

\label{Section: Ranked regular rings}If $n\in \mathbb{N}$, denote by $%
\mathbb{M}_{n}$ the algebra of $n\times n$ matrices over $\mathbb{C}$ and by 
$\mathrm{rk}$ the normalized rank on $\mathbb{M}_{n}$. Thus, if $A\in 
\mathbb{M}_{n}$, $\mathrm{rk}\left( A\right) $ is the rank of $A$ divided by 
$n$. In \cite{Elek}, Elek considered metric ultraproducts of the matrix
algebras over $\mathbb{C}$ with respect to the \emph{rank metric}
\begin{equation*}
d\left( a,b\right) =\mathrm{rk}\left( a-b\right)\text{.}
\end{equation*}%


If $\sigma \in S_{n}$, denote, as before, by $A_{\sigma }$ the permutation
matrix associated to $\sigma $, regarded as an element of $\mathbb{M}_{n}$. It is easily seen that
\begin{equation}
\mathrm{rk}\left( I_{n}-A_{\sigma }\right) =1-\frac{l}{n}\text{,}
\label{Eq: 4.1}
\end{equation}
where $l$ is the number of possibly trivial cycles of $\sigma $.

It can be deduced from Lemma~\ref{Lemma: order property symmetric groups} that, for any increasing sequence $\left( k_n\right) _{n\in \mathbb{N}}$ of natural numbers, the sequence $\left( \mathbb{M}_{k_{n}}\right) _{n\in \mathbb{N}}$ of matrix algebras endowed with the rank metric has the order property witnessed by the formula $\varphi \left(x_{1},x_{2},y_{1},y_{2}\right) $ defined by%
\begin{equation*}
\min \left\{ 3d\left( x_{1}y_{2},y_{2}x_{1}\right) ,1\right\}\text{.}
\end{equation*}%
The proof of this fact is left to the reader, being very similar to the proof of Lemma~\ref{Lemma: order property for unitary groups}, using (\ref{Eq: 4.1}) instead of (\ref{Eq: 1.1}). The following proposition, that answers a question of Elek, can now be obtained by direct application of Corollary \ref{Corollary: algebraically non-isomorphic metric ultraproducts}.

\begin{proposition}
\label{Proposition: non-isomorphic ranked ultraproducts}If CH fails, then
for every increasing sequence $\left( k_{n}\right) _{n\in \mathbb{N}}$ of
natural numbers there are $2^{\mathfrak{c}}$ many metric ultraproducts of
the sequence $\left( \mathbb{M}_{k_{n}}\right) _{n\in \mathbb{N}}$ with
respect to the rank metric whose multiplicative semigroups
are pairwise non-isomorphic.
\end{proposition}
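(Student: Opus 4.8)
The plan is to apply Corollary \ref{Corollary: algebraically non-isomorphic metric ultraproducts} directly, but in the language $\mathcal{L}$ whose only non-logical symbol is a binary operation for multiplication. The key observation is that in such a language an algebraic isomorphism is exactly an isomorphism of multiplicative semigroups, so the conclusion of the Corollary specializes precisely to the statement we want. The whole argument therefore reduces to checking that the sequence $\left( \mathbb{M}_{k_n}\right)_{n\in\mathbb{N}}$, regarded as multiplicative semigroups endowed with the rank metric, satisfies the hypotheses of that Corollary.

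First I would verify that $\left( \mathbb{M}_{k_n},\cdot \right)$ is a legitimate $\mathcal{L}$-structure for the rank metric. Since $\mathrm{rk}\left( XY\right) \leq \min \left\{ \mathrm{rk}\left( X\right) ,\mathrm{rk}\left( Y\right) \right\} $ and $\mathrm{rk}$ is subadditive, multiplication is $1$-Lipschitz in each variable, hence uniformly continuous with a modulus independent of $n$, and the diameter is bounded by $1$. This same estimate guarantees that the metric ultraproduct $\prod_{n}^{\mathcal{U}}\left( \mathbb{M}_{k_n},\cdot \right)$ taken in $\mathcal{L}$ coincides, as a semigroup, with the multiplicative semigroup of the ring ultraproduct $\prod_{n}^{\mathcal{U}}\mathbb{M}_{k_n}$ of \cite{Elek}: the two constructions share the same underlying set and the same multiplication. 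This identification is what allows the $\mathcal{L}$-structure output of the Corollary to be read back as a statement about the multiplicative semigroups of the ranked ultraproducts.

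Next I would note that the formula $\varphi \left( x_{1},x_{2},y_{1},y_{2}\right)$ defined by $\min \left\{ 3d\left( x_{1}y_{2},y_{2}x_{1}\right) ,1\right\} $ is an $\mathcal{L}$-formula of the form $q\left( d\left( s,t\right) \right) $, with $s=x_{1}y_{2}$ and $t=y_{2}x_{1}$ both $\mathcal{L}$-terms and $q\left( r\right) =\min \left\{ 3r,1\right\} $ continuous and vanishing exactly at $0$. It is exactly here that passing to a semigroup rather than a group is forced to pay off: the commutator device used for $S_{n}$ and $U_{n}$ is unavailable without inverses, but non-commutativity of $x_{1}$ and $y_{2}$ is detected just as well by $d\left( x_{1}y_{2},y_{2}x_{1}\right) $. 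Granting the order property for $\left( \mathbb{M}_{k_n}\right) _{n\in\mathbb{N}}$ witnessed by $\varphi$ (the exercise preceding the statement, parallel to Lemma \ref{Lemma: order property for unitary groups} and using (\ref{Eq: 4.1})), all hypotheses of Corollary \ref{Corollary: algebraically non-isomorphic metric ultraproducts} are in place, and it yields $2^{\mathfrak{c}}$ metric ultraproducts $\prod_{n}^{\mathcal{U}}\left( \mathbb{M}_{k_n},\cdot \right)$ that are pairwise non-isomorphic as $\mathcal{L}$-structures, i.e. as multiplicative semigroups.

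There is no genuine obstacle here beyond careful bookkeeping about the language. The two points to get right are that the relation $\prec _{\varphi ,\varepsilon }$ and the chains live in $M^{2}$, with $x_{2}$ and $y_{1}$ occurring only as dummy variables exactly as in the group case, and that ``algebraic isomorphism'' in $\mathcal{L}$ means semigroup isomorphism and nothing stronger; combined with the identification of the two ultraproducts from the second step, this transfers the output of the Corollary into the desired conclusion about the multiplicative semigroups of the rank-metric ultraproducts of $\left( \mathbb{M}_{k_n}\right) _{n\in\mathbb{N}}$.
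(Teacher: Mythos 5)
Your proposal is correct and follows essentially the same route as the paper: the order property for $\left( \mathbb{M}_{k_n}\right)_{n\in\mathbb{N}}$ witnessed by $\min\left\{ 3d\left( x_{1}y_{2},y_{2}x_{1}\right) ,1\right\}$ (deduced from Lemma \ref{Lemma: order property symmetric groups} via the permutation matrices and (\ref{Eq: 4.1})), followed by a direct application of Corollary \ref{Corollary: algebraically non-isomorphic metric ultraproducts}. Your explicit bookkeeping --- working in the multiplication-only language so that ``algebraic isomorphism'' means semigroup isomorphism, and identifying the metric ultraproduct of the semigroups with the multiplicative semigroup of Elek's rank ultraproduct --- is precisely what the paper leaves implicit, and is a worthwhile clarification rather than a departure.
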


Proposition~\ref{Proposition: non-isomorphic ranked ultraproducts} can in fact be generalized to direct sequences of finite sums of matrix algebras obtained from a Bratteli diagram and a harmonic function as in \cite{Elek}, Section~3. Moreover, the same holds without change and with the same proof if $\mathbb{C}$ is replaced by any other field or by any von Neumann regular ring $R$ endowed with a rank function $N$. The easy details are left to the interested reader. An exhaustive treatment of von Neumann
regular rings and ranked von Neumann regular rings can be found in \cite{Goodearl}.

\section{The $\Sigma _{2}$-theories of universal sofic and hyperlinear groups%
}

\label{Section: The theories of universal sofic and hyperlinear groups}

If CH holds, then every universal sofic group and every universal hyperlinear
group is saturated when regarded as a metric structure; hence two such
groups are isometrically isomorphic if and only if their metric theories coincide.
(Again assuming CH, it is currently not known whether there exist algebraically
non-isomorphic universal sofic groups or algebraically non-isomorphic universal hyperlinear
groups; in \cite{Thomas}, Thomas asked whether all the universal sofic groups
were elementarily equivalent when regarded as first order structures.) In this section,
we will prove the partial results that all universal sofic groups have the same
metric $\Sigma _2$-theories and that all universal hyperlinear groups have the same metric
$\Sigma _2$-theories. This is equivalent to the
statement that for any $\Sigma _{2}$ formula $\varphi $ in the language of
metric groups, the sequences of real numbers given by evaluation of $%
\varphi $ in the symmetric groups and, respectively, in the unitary groups
converge. Since a formula $\varphi $ is $\Pi _{2}$ iff $1-\varphi $ is $%
\Sigma _{2}$, this implies that universal sofic, and respectively
hyperlinear, groups have the same $\Pi _{2}$ theories as well.

The proof will make use of a general lemma, roughly asserting that if $\left( M_n\right) _{n\in \mathbb{N}}$ is a sequence of structures such that, given $m\in \mathbb{N}$, the elements of $M_n$, for $n$ large enough, can be arbitrarily well approximated by elements in the range of some approximate embedding of $M_m$ into $M_n$, then for every $\Sigma _2$ sentence $\varphi$ the sequence $\left( \varphi ^{M_n}\right) _{n\in \mathbb{N}}$ converges.
In order to precisely state and prove the lemma, we need to introduce the following terminology. A function $\iota :M\rightarrow N$ between structures in a metric language
is said to \emph{preserve all the function and relation symbols up to $\delta \geq
0$} if, for every $n$-ary function symbol $f$ and $a_{1},\ldots ,a_{n}\in M$,%
\begin{equation*}
d^{N}\left( f^{N}\left( \iota \left( a_{1}\right) ,\ldots ,\iota \left(
a_{n}\right) \right) ,\iota \left( f^{M}\left( a_{1},\ldots ,a_{n}\right)
\right) \right) \leq \delta
\end{equation*}
and for every $n$-ary relation symbol $R$ and $a_{1},\ldots ,a_{n}\in M$,
\begin{equation*}
\left\vert R^{N}\left( \iota \left( a_{1}\right) ,\ldots ,\iota \left(
a_{n}\right) \right) -R^{M}\left( a_{1},\ldots ,a_{n}\right) \right\vert
\leq \delta \text{.}
\end{equation*}
If $\varphi \left(x_1,\ldots ,x_n\right) $ is a formula, the function $\iota $ is said to preserve $\varphi $ up to $\delta $ if
\begin{equation*}
\left\vert \varphi ^{N}\left( \iota \left( a_{1}\right) ,\ldots ,\iota \left(
a_{n}\right) \right) -\varphi ^{M}\left( a_{1},\ldots ,a_{n}\right) \right\vert
\leq \delta \text{.}
\end{equation*}
Theorem~3.5 of \cite{BY-B-H-U} asserts that the interpretation of a formula $\varphi $ in any structure is uniformly continuous in every variable, with uniform continuity modulus independent from the other variables and from the structure. It follows that for every $\epsilon >0$ there exists $\delta >0$ such that every embedding $\iota $ of a structure $M$ into a structure $N$ preserving all the function and relation symbols up to $\delta $ also preserves $\varphi $ up to $\epsilon $.

Observe that, if $\psi \left(y,x\right) $ is a quantifier-free formula, then
\begin{equation*}
\inf_{x}\sup_{y}\psi \left( y,x\right)
\end{equation*}
is a $\Sigma _2$ sentence. Conversely, by \cite{BY-B-H-U}, Theorem~6.3, Proposition~6.6 and Proposition~6.9, the set of such sentences is dense in the set of all $\Sigma _2$ sentences. Thus, there is no loss of generality in considering only this type of $\Sigma _2$ sentence.
\begin{lemma}
\label{Lemma: convergence of sigma2}Assume that $\left( M_{n}\right) _{n\in 
\mathbb{N}}$ is a sequence of structures in a metric language $\mathcal{L}$.
Suppose that, $\forall \delta >0$, $\exists m_{0}\in \mathbb{N}$ such that, $%
\forall m\geq m_{0}$, $\exists k_{0}\in \mathbb{N}$ such that $\forall k\geq
k_{0}$, $\forall a\in M_k$ there exists an embedding $\iota _{m}^{k}$ (possibly depending on $a$) of $M_{m}$ into $M_{k}$
satisfying the following properties:
\begin{itemize}
\item $\iota _{m}^{k}$ preserves all the relation and function symbols up to 
$\delta $,

\item there is $\tilde{a}\in M_{m}$ such that $d\left( \iota
_{m}^{k}\left( \tilde{a}\right) ,a\right) <\delta $.
\end{itemize}
Then if $\psi $ is a quantifier-free $\mathcal{L}$-formula and $%
\varphi $ is the $\mathcal{L}$-formula%
\begin{equation*}
\inf_{x}\sup_{y_{1},\ldots ,,y_{n}}\psi \left( y_{1},\ldots ,y_{n},x\right) \text{%
,}
\end{equation*}%
then the sequence $\left( \varphi ^{M_{m}}\right) _{m\in \mathbb{N}}$
converges.
\end{lemma}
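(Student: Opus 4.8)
The plan is to prove a single one-sided comparison, namely that for every $\eta>0$ there is $m_0$ such that for all $m\ge m_0$ and all sufficiently large $k$ one has $\varphi^{M_k}\ge \varphi^{M_m}-\eta$, and then to deduce convergence purely formally from the fact that $\liminf$ never exceeds $\limsup$ of the same sequence. I would deliberately \emph{not} attempt the reverse inequality $\varphi^{M_k}\le \varphi^{M_m}+\eta$: that direction is where the genuine difficulty lies, and the whole point of the argument is that the $\Sigma_2$-shape $\inf_x\sup_{\bar y}$ allows one to bypass it.

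To set up the comparison, fix $\eta>0$ and put $\epsilon=\eta/3$. Using the uniform continuity recalled before the statement (Theorem 3.5 of \cite{BY-B-H-U}), I would choose $\delta>0$ small enough that simultaneously: (i) any embedding preserving all function and relation symbols up to $\delta$ also preserves the quantifier-free formula $\psi$ up to $\epsilon$, and (ii) moving the last argument of $\psi$ a distance less than $\delta$ changes the value of $\psi$ by at most $\epsilon$, uniformly in the other arguments and in the structure. Feeding this $\delta$ into the hypothesis produces $m_0$, and for each $m\ge m_0$ a threshold $k_0=k_0(m)$.

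Now fix $m\ge m_0$ and $k\ge k_0$, and let $x'\in M_k$ be arbitrary; it suffices to show $\sup_{\bar y}\psi^{M_k}(\bar y,x')\ge \varphi^{M_m}-\eta$, since taking the infimum over $x'$ then gives $\varphi^{M_k}\ge \varphi^{M_m}-\eta$. Applying the hypothesis with $a=x'$ yields an embedding $\iota$ of $M_m$ into $M_k$ that preserves the symbols up to $\delta$, together with $\tilde{x}\in M_m$ satisfying $d(\iota(\tilde{x}),x')<\delta$. Since $\varphi^{M_m}=\inf_x\sup_{\bar y}\psi^{M_m}(\bar y,x)\le \sup_{\bar y}\psi^{M_m}(\bar y,\tilde{x})$, I can pick $\bar y_0\in M_m^n$ with $\psi^{M_m}(\bar y_0,\tilde{x})>\varphi^{M_m}-\epsilon$. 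I then transport $\bar y_0$ through the \emph{same} embedding: by (i), $\psi^{M_k}(\iota(\bar y_0),\iota(\tilde{x}))>\varphi^{M_m}-2\epsilon$, and by (ii) together with $d(\iota(\tilde{x}),x')<\delta$, $\psi^{M_k}(\iota(\bar y_0),x')>\varphi^{M_m}-3\epsilon=\varphi^{M_m}-\eta$. As $\iota(\bar y_0)$ is a legitimate tuple of $M_k^n$, the inner supremum at $x'$ is at least this, as required. The decisive feature is that the single embedding attached to $x'$ is used both to approximate the outer witness $x'$ and to push the inner witness $\bar y_0$ out of $M_m$; the approximate density is invoked only for the one element $x'$, never for a tuple.

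To finish, the established inequality says that for each $\eta>0$ there is $m_0$ with $\liminf_n\varphi^{M_n}\ge \varphi^{M_m}-\eta$ for every $m\ge m_0$ (the $\liminf$ taken over the single sequence $(\varphi^{M_n})_n$). Taking the supremum over $m\ge m_0$ gives $\liminf_n\varphi^{M_n}\ge \limsup_n\varphi^{M_n}-\eta$, and since $\liminf\le\limsup$ always, letting $\eta\to 0$ forces equality, i.e.\ $(\varphi^{M_n})_n$ converges. I expect the main obstacle to be conceptual rather than computational: a symmetric attack stalls on the upper bound, where the inner supremum ranges over \emph{all} tuples of $M_k$ while the hypothesis only supplies approximating embeddings one point at a time, with the embedding depending on the point, so an arbitrary tuple cannot be pulled back into $M_m$ at once. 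Recognizing that a one-sided estimate suffices, and that the $\liminf$/$\limsup$ squeeze then closes the argument, is the key step.
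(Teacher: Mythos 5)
Your proposal is correct and follows essentially the same route as the paper's proof: a one-sided estimate $\varphi^{M_k}\geq\varphi^{M_m}-\eta$ obtained by using the embedding both to approximate the outer witness $a\in M_k$ (via the density clause and uniform continuity of $\psi$ in its last variable) and to push inner witnesses from $M_m$ into $M_k$, followed by the $\liminf$/$\limsup$ squeeze. The only cosmetic difference is that you extract an explicit near-optimal tuple $\bar y_0$ (costing an extra $\epsilon$ in the budget), whereas the paper runs the same argument directly as a chain of inequalities between suprema.
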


\begin{proof}
Fix $\varepsilon >0$ and define $\delta >0$ such that any embedding
preserving all the function and relation symbols up to $\delta $ preserves $%
\psi $ up to $\varepsilon $, and moreover $\delta <\omega \left( \varepsilon \right) $, where $\omega $ is a uniform continuity modulus for $\psi $ in the last variable. Consider $m_0,m\geq m_0,k_0$ and $k\geq k_0$ as in the statement. If $a\in M_k$, then there exists an embedding $i_{m}^{k}$ of $M_m$ into $M_k$ that preserves all the functions and relation symbols up to $\delta $ and such that $d\left( \iota
_{m}^{k}\left( \tilde{a}\right) ,a\right) <\delta $ for some $\tilde{a} \in M_m$. Hence,%
\begin{eqnarray*}
\sup_{y_{1},\ldots ,y_{n}\in M_{k}}\psi \left( y_{1},\ldots ,y_{n},a\right) &\geq
&\sup_{y_{1},\ldots ,y_{n}\in M_{k}}\psi \left( y_{1},\ldots ,y_{n},\iota
_{m}^{k}\left( \tilde{a}\right) \right) -\varepsilon \\
&\geq &\sup_{y_{1},\ldots ,y_{n}\in M_{m}}\psi \left( \iota _{m}^{k}\left(
y_{1}\right) ,\ldots ,\iota _{m}^{k}\left( y_{n}\right) ,\iota
_{m}^{k}\left( \tilde{a}\right) \right) -\varepsilon \\
&\geq &\sup_{y_{1},\ldots ,y_{n}\in M_{m}}\psi \left( y_{1},\ldots ,y_{n},%
\tilde{a}\right) -2\varepsilon \\
&\geq &\inf_{x\in M_{m}}\sup_{y_{1},\ldots ,y_{n}\in M_{m}}\psi \left(
y_{1},\ldots ,y_{n},x\right) -2\varepsilon \\
&=&\psi ^{M_{m}}-2\varepsilon \text{.}
\end{eqnarray*}%
Since this is true for every $a\in M_{k}$,%
\begin{equation*}
\varphi ^{M_{k}}=\inf_{x\in M_{k}}\sup_{y_{1},\ldots ,y_{n}\in M_{k}}\psi
\left( y_{1},\ldots ,y_{n},x\right) \geq \varphi ^{M_{m}}-2\varepsilon .
\end{equation*}%
Since this is true for every $k\geq k_{0}$,%
\begin{equation*}
\liminf_{k\rightarrow +\infty }\psi ^{M_{k}}\geq \psi ^{M_{m}}-2\varepsilon \text{,}
\end{equation*}%
and since this is true for every $m\geq m_{0}$,%
\begin{equation*}
\liminf_{k\rightarrow +\infty }\psi ^{M_{k}}\geq \limsup_{m\rightarrow
+\infty }\psi ^{M_{m}}-2\varepsilon .
\end{equation*}%
Finally, letting $\varepsilon $ go to $0$, one gets%
\begin{equation*}
\liminf_{k\rightarrow +\infty }\psi ^{M_{k}}\geq \limsup_{m\rightarrow
+\infty }\psi ^{M_{m}}\text{.}
\end{equation*}%
This concludes the proof.
\end{proof}

We will now prove that the sequence $\left( S_n\right) _{n\in \mathbb{N}}$ of symmetric groups satisfies the hypothesis of 
Lemma~\ref{Lemma: convergence of sigma2}. The key observation is that a permutation $\sigma \in S_{km}$ belongs to the image of some isometric embedding of $S_m$ into $S_{km}$ if, for every $l\geq 1$, the number $l$-cycles of $\sigma $ is a multiple of $k$ (here and in the following, we consider $0$ to be multiple of any natural number). Thus, it is enough to prove that one can ``chop up'' any permutation $\sigma \in S_{km}$, obtaining another permutation $\tilde{\sigma }$ close to $\sigma $ with the number of its $l$-cycles a multiple of $k$ for every $l\in \mathbb{N}$. Lemma~\ref{Lemma: chop up 1} and Lemma~\ref{Lemma: chop up 2} essentially show that any permutation is close to a permutation with only ``small'' cycles, while Lemma~\ref{Lemma: chop up 3} shows that a permutation with only small cycles is close to one with the number of its $l$-cycles a multiple of $k$.

In order to simplify the discussion, we will introduce the following notation: If $\sigma \in S_{n}$ and $l\geq 1$, define $C_{l}\left( \sigma \right) $ to be the set of cycles of $\sigma $ of length $l$ and $w\left( \sigma \right) $
the greatest $l$ such that $C_{l}\left( \sigma \right) $ is non-empty. In
particular, $C_{1}\left( \sigma \right) $ is the set of fixed points of $%
\sigma $.

\begin{lemma}
\label{Lemma: chop up 1}
For every $m,n\in \mathbb{N}$ such that $m|n$, if $\sigma \in S_{n}$, then
there exists $\tau \in S_{n}$ such that $w\left( \tau \right) \leq m$, $%
C_{1}\left( \tau \right) \supset C_{1}\left( \sigma \right) $ and $d\left(
\sigma ,\tau \right) \leq \frac{2}{m}$
\end{lemma}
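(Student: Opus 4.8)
The plan is to modify $\sigma$ one long cycle at a time, leaving every cycle of length at most $m$ completely untouched. Since fixed points are precisely the cycles of length $1\le m$, this automatically yields $C_1(\tau)\supset C_1(\sigma)$, and the whole problem reduces to breaking each cycle of $\sigma$ of length $>m$ into cycles of length $\le m$ while controlling how many points are displaced. Thus I would treat each ``long'' cycle separately and then add up the contributions.

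Fix such a cycle $(a_1\,a_2\,\cdots\,a_l)$ with $l>m$, so that $\sigma(a_i)=a_{i+1}$ for $i<l$ and $\sigma(a_l)=a_1$. I would cut it into consecutive blocks $\{a_1,\dots,a_m\}$, $\{a_{m+1},\dots,a_{2m}\}$, and so on, with a final block of length $s<m$ if $m\nmid l$, and define $\tau$ to permute each block cyclically within itself. The key observation is that inside a block $\tau$ agrees with $\sigma$ on every element except the last: only at the final point of each block does $\tau$ send the point back to the head of the block rather than forward into the next one. Consequently $\tau$ differs from $\sigma$ at exactly $\lceil l/m\rceil$ points of this cycle, and every resulting cycle has length $\le m$.

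To finish, I would count globally. Let $l_1,\dots,l_r$ be the lengths of the cycles of $\sigma$ exceeding $m$; then the total number of displaced points is $\sum_{t}\lceil l_t/m\rceil$. Because $l_t>m$ forces $l_t/m>1$, we get $\lceil l_t/m\rceil\le l_t/m+1\le 2l_t/m$, so the count is at most $\frac{2}{m}\sum_t l_t\le\frac{2n}{m}$, the final inequality holding since these cycles are disjoint and hence $\sum_t l_t\le n$. Dividing by $n$ gives $d(\sigma,\tau)\le\frac{2}{m}$, while $w(\tau)\le m$ follows because the untouched cycles already had length $\le m$ and each new cycle has length $\le m$ by construction.

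The only delicate point is the bookkeeping in the second paragraph: one must check that the block construction moves exactly one point per block rather than two, since a naive reading of ``splitting a cycle'' suggests two altered images per cut. Describing $\tau$ as cyclic within each block makes the one-per-block count transparent, and this is what underlies the favorable bound $2/m$ rather than something weaker. I would also remark that the divisibility hypothesis $m\mid n$ is not actually used in this lemma; it is presumably retained only for uniformity with the companion Lemmas \ref{Lemma: chop up 2} and \ref{Lemma: chop up 3}.
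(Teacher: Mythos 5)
Your proposal is correct and follows essentially the same route as the paper: both break each cycle of length $l>m$ into consecutive blocks of length $m$ plus a possible remainder block (costing one displaced point per block, i.e.\ $\lceil l/m\rceil$ points), and then use the same global estimate $\lceil l/m\rceil \le 2l/m$ together with $\sum_t l_t\le n$ to get $d(\sigma,\tau)\le \frac{2}{m}$. Your explicit verification of the one-point-per-block count, and your observation that $m\mid n$ is not actually needed here, are accurate refinements of the paper's argument rather than departures from it.
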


\begin{proof}
Suppose $k\in \mathbb{N}$ is such that $n=km$. If $\sigma $ is a cycle of
length $n$, there is a product of $k$ cycles of length $m$ at distance $%
\frac{1}{m}$ from $\sigma $. Suppose $\sigma $ has no cycle of length $n$.
If $m<l<n$, then $l=\lambda m+\rho $ for some $0\leq \rho <m$ and $1\leq
\lambda <k$. Pick $c\in C_{l}\left( \sigma \right) $ and consider the
permutation $\sigma ^{\prime }$ obtained by $\sigma $ breaking up $c$ into $%
\lambda $ cycles of length $m$ and, if $\rho >0$, one cycle of length $\rho $%
. Thus, 
\begin{equation*}
km\cdot d\left( \sigma ,\sigma ^{\prime }\right) \leq \lambda +1.
\end{equation*}%
Consider the permutation $\tau $ obtained by $\sigma $ repeating this
process for any element of $\bigcup_{k>m}C_{k}\left( \sigma \right) $. Then, 
$w\left( \tau \right) \leq m$. Define, for $\lambda \in \left\{
1,2,\ldots ,k-1\right\} $ and $\rho \in \left\{ 0,1,\ldots ,m-1\right\} $, 
\begin{equation*}
n_{\lambda ,\rho }=\left\vert C_{\lambda m+\rho }\left( \sigma \right)
\right\vert .
\end{equation*}%
Observe that%
\begin{equation*}
\sum_{\lambda =1}^{k-1} \sum_{\rho =0}^{m-1}\lambda mn_{\lambda ,\rho }\leq
\sum_{\lambda =1}^{k-1}\sum_{\rho =0}^{m-1}\left( \lambda m+\rho \right)
n_{\lambda ,\rho }\leq n=km,
\end{equation*}%
and hence%
\begin{equation*}
\sum_{\lambda =1}^{k-1}\sum_{\rho =0}^{m-1}\lambda n_{\lambda ,\rho }\leq k.
\end{equation*}%
Now, we have%
\begin{equation*}
km\cdot d\left( \sigma ,\tau \right) \leq \sum_{\lambda =1}^{k-1}\sum_{\rho
=0}^{m-1}\left( \lambda +1\right) n_{\lambda ,\rho }\leq 2\sum_{\lambda
=1}^{k-1}\sum_{\rho =0}^{m-1}\lambda n_{\lambda ,\rho }\leq 2k \text{,}
\end{equation*}%
and hence $d\left( \sigma ,\tau \right) \leq \frac{2}{m}$.
\end{proof}

\begin{lemma}
\label{Lemma: chop up 2}
For every $m,n\in \mathbb{N}$ such that $m|n$ and $\beta \in \left(
0,1\right) $, if $\sigma \in S_{n}$, then there exists $\tau \in S_{n}$ such
that $C_{1}\left( \tau \right) \supset C_{1}\left( \sigma \right) $, $%
w\left( \tau \right) \leq \left\lceil m^{\beta }\right\rceil $ and $d\left(
\sigma ,\tau \right) \leq \frac{8}{m^{\beta }}$.
\end{lemma}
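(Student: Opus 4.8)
The plan is to reduce the maximal cycle length in stages: first down to $m$ by a single application of Lemma \ref{Lemma: chop up 1}, and then by repeatedly \emph{halving} it until it drops below $\lceil m^{\beta}\rceil$. Concretely, set $M=\lceil m^{\beta}\rceil$ and apply Lemma \ref{Lemma: chop up 1} to obtain $\sigma_{0}\in S_{n}$ with $w(\sigma_{0})\le m=:L_{0}$, $C_{1}(\sigma_{0})\supseteq C_{1}(\sigma)$ and $d(\sigma,\sigma_{0})\le 2/m$. I would then build a finite sequence $\sigma_{0},\sigma_{1},\dots,\sigma_{t}$ together with bounds $L_{s+1}=\lceil L_{s}/2\rceil$, maintaining the invariant $w(\sigma_{s})\le L_{s}$ and $C_{1}(\sigma_{s})\supseteq C_{1}(\sigma)$, stopping at the first index $t$ with $L_{t}\le M$ and putting $\tau=\sigma_{t}$. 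The structural conclusions $w(\tau)\le M=\lceil m^{\beta}\rceil$ and $C_{1}(\tau)\supseteq C_{1}(\sigma)$ are then immediate from the invariant, since every step modifies only cycles of length at least $2$ and therefore never destroys a fixed point.

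The heart of the argument is the single halving step and its cost. Given $\sigma_{s}$ with $w(\sigma_{s})\le L_{s}$, I would form $\sigma_{s+1}$ by splitting each cycle $c$ of $\sigma_{s}$ with $|c|>\lceil L_{s}/2\rceil$ into two consecutive arcs, each a cycle of length at most $\lceil L_{s}/2\rceil$; this is possible because $|c|\le L_{s}\le 2\lceil L_{s}/2\rceil$. Breaking a single cycle into two pieces changes the image of exactly the two cut points, so each such cycle contributes $2/n$ to $d(\sigma_{s},\sigma_{s+1})$. Since the cycles being split have length greater than $\lceil L_{s}/2\rceil$, there are at most $n/\lceil L_{s}/2\rceil$ of them, whence
\[
d(\sigma_{s},\sigma_{s+1})\le \frac{2}{n}\cdot\frac{n}{\lceil L_{s}/2\rceil}=\frac{2}{L_{s+1}}.
\]
This makes precise the idea that each level of halving is cheap because ``tall'' cycles are necessarily few.

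Finally I would sum the contributions by the triangle inequality:
\[
d(\sigma,\tau)\le \frac{2}{m}+\sum_{s=1}^{t}\frac{2}{L_{s}}=\sum_{s=0}^{t}\frac{2}{L_{s}}.
\]
Since the $L_{s}$ decrease by a factor essentially $2$ at each step and the last one satisfies $L_{t}>M/2$ (because $L_{t-1}>M$ forces $L_{t}=\lceil L_{t-1}/2\rceil>M/2$), this is a geometric series dominated by its final term $2/L_{t}<4/M$; with exact halving $\sum_{s=0}^{t}2/L_{s}<4/L_{t}<8/M\le 8/m^{\beta}$, which is the desired bound. I expect the main obstacle to be exactly this last bookkeeping step: because $L_{s+1}=\lceil L_{s}/2\rceil$ only \emph{approximately} halves $L_{s}$, the ratios $L_{s}/L_{s+1}$ lie slightly below $2$, and one must check that the accumulated rounding does not push the geometric sum past the stated constant $8/m^{\beta}$ (equivalently, one may first dispose of the small values of $m$, for which the statement is trivial, and run the estimate only once $m$, and hence every $L_{s}$, is comfortably large). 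The per-step cost estimate and the preservation of the fixed points are then routine by comparison with Lemma \ref{Lemma: chop up 1}.
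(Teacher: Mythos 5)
Your proof takes a genuinely different route from the paper's, and its skeleton is sound. The paper does not iterate at all: it pads $\sigma\in S_{km}$ with fixed points up to size $N=\lceil km/m^{\beta}\rceil\,\lceil m^{\beta}\rceil\le 4km$, so that $\lceil m^{\beta}\rceil$ divides $N$, applies Lemma \ref{Lemma: chop up 1} \emph{once} at scale $\lceil m^{\beta}\rceil$ inside $S_{N}$, and then restricts back to $\{1,\dots,km\}$ (legitimate because the padded points remain fixed); the constant $8$ arises there as the dilation factor $N/km\le 4$ times the $2/\lceil m^{\beta}\rceil$ from Lemma \ref{Lemma: chop up 1}. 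Your iterated-halving scheme replaces this padding/dilation trick, and in fact makes Lemma \ref{Lemma: chop up 1} dispensable (you could start the halving at $L_{0}=n$). Your per-step cost estimate is correct: splitting one cycle changes exactly two images, cycles longer than $L_{s+1}$ number at most $n/L_{s+1}$, so $d(\sigma_{s},\sigma_{s+1})\le 2/L_{s+1}$, and the invariants on $w$ and on fixed points are preserved exactly as you say.

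The one incomplete point is the one you flag, and your fallback (``dispose of small $m$'') is not the right repair. If you bound every term of the series against the last one, the rounding in $L_{s+1}=\lceil L_{s}/2\rceil$ costs a multiplicative factor of the form $1+\Theta(1/M)$, giving a bound like $8M/(M^{2}-1)$; when $m^{\beta}$ happens to be an integer, so that $M=\lceil m^{\beta}\rceil=m^{\beta}$, there is \emph{no} slack between $8/M$ and $8/m^{\beta}$ to absorb this, no matter how large $m$ is. The clean repair is to compare downward to $m$ rather than upward to $L_{t}$, because in that direction the rounding works in your favor: by the identity $\lceil\lceil m/2^{s}\rceil/2\rceil=\lceil m/2^{s+1}\rceil$ one has $L_{s}=\lceil m/2^{s}\rceil\ge m/2^{s}$, whence
\begin{equation*}
\sum_{s=0}^{t}\frac{2}{L_{s}}\;\le\;\frac{2}{m}\sum_{s=0}^{t}2^{s}\;<\;\frac{2^{t+2}}{m}\text{,}
\end{equation*}
while minimality of $t$ gives $\lceil m/2^{t-1}\rceil>M$, hence $m/2^{t-1}>M$ (as $M$ is an integer) and so $2^{t+2}<8m/M$. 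Therefore the sum is $<8/M\le 8/m^{\beta}$; the degenerate case $t=0$, i.e.\ $m\le M$, is trivial since then $2/m\le 8/m^{\beta}$. With this one adjustment in the bookkeeping, your argument is complete and yields exactly the stated constant.
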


\begin{proof}
Define $k=\frac{n}{m}$ and $N=\left\lceil \frac{km}{m^{\beta }}\right\rceil \left\lceil m^{\beta
}\right\rceil $. Observe that%
\begin{equation*}
N\leq \left( \frac{km}{m^{\beta }}+1\right) \left( m^{\beta }+1\right) =km+%
\frac{km}{m^{\beta }}+m^{\beta }+1\leq 4km\text{.}
\end{equation*}%
If $\sigma \in S_{km}$, consider the element $\tilde{\sigma }$ of $S_{N}$
acting as $\sigma $ on $\left\{ 1,2,\ldots ,km\right\} $ and fixing $\left\{
km+1,\ldots ,N\right\} $ pointwise. By Lemma~\ref{Lemma: chop up 1}, there is $%
\tilde{\tau }\in S_{N}$ such that $C_{1}\left( \tilde{\tau }\right)
\supset C_{1}\left( \tilde{\sigma }\right) $, $w\left( \tilde{\tau }%
\right) \leq \left\lceil m^{\beta }\right\rceil $ and $d\left( \tilde{%
\sigma },\tilde{\tau }\right) \leq \frac{2}{\left\lceil m^{\beta
}\right\rceil }$. Now consider the element $\tau $ of $S_{km}$ obtained
restricting $\tilde{\tau }$ to $\left\{ 1,2,\ldots ,km\right\} $. Now, $%
w\left( \tau \right) \leq \left\lceil m^{\beta }\right\rceil $, $C_{1}\left(
\tau \right) \supset C_{1}\left( \sigma \right) $ and%
\begin{equation*}
d\left( \sigma ,\tau \right) =\frac{N}{km}d\left( \tilde{\sigma },%
\tilde{\tau }\right) \leq \frac{8}{m^{\beta }}.\qedhere
\end{equation*}
\end{proof}

\begin{lemma}
\label{Lemma: chop up 3}

For every $\beta >0$ there exists $m_{0}\in \mathbb{N}$ such that, if $m\geq
m_{0}$, $k\in \mathbb{N}$, $n=km$ and $\tau \in S_{n}$ is such that $w\left(
\tau \right) \leq \left\lceil m^{\beta }\right\rceil $, then there exists $\rho
\in S_{n}$ such that $w\left( \rho \right) \leq \left\lceil m^{\beta
}\right\rceil $, $C_{1}\left( \rho \right) \supset C_{1}\left( \sigma
\right) $, $d\left( \rho ,\tau \right) <m^{2\beta -1}$ and $C_{i}\left( \rho
\right) $ a multiple of $k$ for every $i\in \mathbb{N}$.
\end{lemma}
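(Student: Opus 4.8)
The plan is to make each cycle-length count divisible by $k$ by discarding the small surplus of cycles of each length into the set of fixed points. (First I would point out what appears to be a typo: the condition ``$C_1(\rho)\supset C_1(\sigma)$'' should read $C_1(\rho)\supset C_1(\tau)$, since no $\sigma$ occurs in the hypothesis.) Concretely, for each $i\ge 2$ I would write $|C_i(\tau)| = q_i k + r_i$ with $0\le r_i<k$, and define $\rho$ from $\tau$ by selecting, for every $i$ with $2\le i\le w(\tau)$, some $r_i$ of the $i$-cycles of $\tau$ and turning all of their points into fixed points, while leaving every other cycle of $\tau$ — in particular all its existing fixed points — intact.

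The structural conditions should then fall out immediately. No cycle longer than $w(\tau)$ is ever created, so $w(\rho)\le w(\tau)\le\lceil m^{\beta}\rceil$; only fixed points are added, so $C_1(\rho)\supset C_1(\tau)$; and for $i\ge 2$ one has $|C_i(\rho)|=q_i k$, a multiple of $k$. The one count not controlled by the construction is $|C_1(\rho)|$, and the observation I would rely on is that it need not be controlled directly: since $n=km$ is a multiple of $k$ and $\sum_{i\ge 2} i\,|C_i(\rho)|$ is a sum of multiples of $k$, the identity $|C_1(\rho)| = n - \sum_{i\ge 2} i\,|C_i(\rho)|$ forces $|C_1(\rho)|$ to be a multiple of $k$ as well. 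Recognizing that the fixed-point count takes care of itself is exactly what makes absorption into fixed points a self-consistent strategy.

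The only step demanding real care — and the one I expect to be the crux — is the distance estimate. Breaking a single $i$-cycle into fixed points displaces exactly $i$ points, so $\rho$ and $\tau$ differ on $\sum_{i=2}^{w(\tau)} i\,r_i$ points. Using $r_i<k$ and $w(\tau)\le\lceil m^{\beta}\rceil=:L$, this is at most $k\sum_{i=2}^{L} i \le k\,L(L+1)/2$, whence $d(\rho,\tau)\le L(L+1)/(2m)$. Since $L\le m^{\beta}+1$, the right-hand side is at most $(m^{2\beta}+3m^{\beta}+2)/(2m)$, and the target bound $d(\rho,\tau)<m^{2\beta-1}$ reduces to the inequality $3m^{\beta}+2<m^{2\beta}$, which holds for all sufficiently large $m$; I would simply take $m_0=m_0(\beta)$ large enough to guarantee it. The factor $1/2$ arising from $\sum_{i} i\approx L^2/2$ leaves comfortable room, so no finer construction (for instance, merging the surplus cycles rather than deleting them) is needed.
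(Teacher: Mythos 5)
Your proof is correct and takes essentially the same route as the paper's: the paper likewise writes $\left\vert C_{i}\left( \tau \right) \right\vert =t_{i}k+r_{i}$, forms $\rho $ by turning the $r_{i}$ surplus $i$-cycles into fixed points for each $2\leq i\leq \left\lceil m^{\beta }\right\rceil $, lets the divisibility of the fixed-point count follow automatically from $n=km$, and chooses $m_{0}$ so that $\sum_{i=1}^{\left\lceil m^{\beta }\right\rceil }i\leq m^{2\beta }$, yielding $km\cdot d\left( \tau ,\rho \right) \leq \sum_{i}ir_{i}\leq km^{2\beta }$. Your reading of the typo (that $C_{1}\left( \rho \right) \supset C_{1}\left( \sigma \right) $ should be $C_{1}\left( \rho \right) \supset C_{1}\left( \tau \right) $) also matches the paper's intended construction.
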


\begin{proof}

Pick $m_{0}\in \mathbb{N}$ such that, for $m\geq m_{0}$, 
\begin{equation*}
\sum_{i=1}^{\left\lceil m^{\beta }\right\rceil }i\leq \frac{m^{2\beta
}+3m^{\beta }+2}{2}\leq m^{2\beta }\text{.}
\end{equation*}
Suppose $m\geq m_0$. Define, for $i\in \left\{ 2,\ldots ,\left\lceil m^{\beta }\right\rceil
\right\} $, 
\begin{equation*}
\left\vert C_{i}\left( \tau \right) \right\vert =n_{i}=t_{i}k+r_{i},
\end{equation*}%
where $0\leq r_{i}<k$ and $t_i\geq 0$. Observe that%
\begin{equation*}
\sum_{i=2}^{\left\lceil m^{\beta }\right\rceil }ir_{i}\leq
k\sum_{i=2}^{\left\lceil m^{\beta }\right\rceil }i\leq km^{2\beta }.
\end{equation*}%
Consider the permutation $\rho $ obtained dropping $r_{i}$ $i$-cycles from $%
\sigma $ for every $i\in \left\{ 2,3,\ldots ,\left\lceil m^{\beta }\right\rceil
\right\} $. Observe that $\left\vert C_{i}\left( \rho \right) \right\vert
=kt_{i}$ for $i=2,3,\ldots ,\left\lceil m^{\beta
}\right\rceil $. Moreover,%
\begin{equation*}
\left\vert C_{1}\left( \rho \right) \right\vert =km-\sum_{i=2}^{\left\lceil
m^{\beta }\right\rceil }C_{i}\left( \rho \right) =\left(
m-\sum_{i=2}^{\left\lceil m^{\beta }\right\rceil }t_{i}\right) k
\end{equation*}%
and $\left\vert C_{i}\left( \rho \right) \right\vert =0$ for $i>\left\lceil
m^{\beta }\right\rceil $. Finally,%
\begin{equation*}
km\cdot d\left( \tau ,\rho \right) \leq \sum_{i=2}^{\left\lceil m^{\beta
}\right\rceil }ir_{i}\leq km^{2\beta }\text{,}
\end{equation*}%
and hence $d\left( \tau ,\rho \right) \leq m^{2\beta -1}$.
\end{proof}

\begin{proposition}
\label{Proposition: isometric embedding}
There exists $m_{0}\in \mathbb{N}$ such that, for every $m\geq m_{0}$ and $k\in 
\mathbb{N}$, if $\sigma \in S_{km}$, then there exists $\rho \in S_{km}$ such
that $C_{1}\left( \rho \right) \supset C_{1}\left( \sigma \right) $, $%
w\left( \rho \right) \leq \left\lceil \sqrt[3]{m}\right\rceil $, $d\left(
\rho ,\sigma \right) \leq \frac{9}{\sqrt[3]{m}}$ and $\rho =\Phi \left( 
\tilde{\rho }\right) $ for some $\tilde{\rho }\in S_{m}$ and
isometric embedding $\Phi :S_{m}\rightarrow S_{km}$.
\end{proposition}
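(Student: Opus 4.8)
The plan is to chain the two ``chopping'' lemmas together with the choice $\beta = 1/3$, and then to place the resulting permutation inside an isometric copy of $S_m$ by a conjugation argument. First I would let $m_0$ be the threshold produced by Lemma \ref{Lemma: chop up 3} for $\beta = 1/3$, and fix $m \geq m_0$, $k \in \mathbb{N}$, and $\sigma \in S_{km}$. Since $m \mid km$, Lemma \ref{Lemma: chop up 2} applied with $\beta = 1/3$ yields $\tau \in S_{km}$ with $C_1(\tau) \supset C_1(\sigma)$, $w(\tau) \leq \lceil \sqrt[3]{m}\,\rceil$ and $d(\sigma,\tau) \leq \frac{8}{\sqrt[3]{m}}$. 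Feeding this $\tau$ into Lemma \ref{Lemma: chop up 3}, again with $\beta = 1/3$, produces $\rho \in S_{km}$ with $w(\rho) \leq \lceil \sqrt[3]{m}\,\rceil$, $C_1(\rho) \supset C_1(\tau)$, $d(\tau,\rho) \leq m^{2/3-1} = \frac{1}{\sqrt[3]{m}}$, and $\lvert C_i(\rho)\rvert$ a multiple of $k$ for every $i \in \mathbb{N}$.

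The triangle inequality then gives $d(\sigma,\rho) \leq d(\sigma,\tau) + d(\tau,\rho) \leq \frac{9}{\sqrt[3]{m}}$, and transitivity of containment gives $C_1(\rho) \supset C_1(\sigma)$; thus all the metric and cycle-length requirements are met, and only the existence of the isometric embedding remains.

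For that last point I would make precise the key observation stated before the chopping lemmas. The action of $S_m$ on $\{1,\ldots,m\}\times\{1,\ldots,k\}$ by $\mu(i,j) = (\mu(i),j)$ defines an isometric embedding $\Phi_0 : S_m \to S_{km}$, exactly as in the proof of Lemma \ref{Lemma: order property symmetric groups}, under which each $l$-cycle of $\mu$ becomes precisely $k$ disjoint $l$-cycles. Using the divisibility of every cycle count of $\rho$, I would build $\tilde{\rho} \in S_m$ having $\lvert C_l(\rho)\rvert / k$ cycles of length $l$ for each $l$; a length count ($\sum_l l\,\lvert C_l(\rho)\rvert = km$) confirms $\tilde{\rho} \in S_m$. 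Then $\Phi_0(\tilde{\rho})$ has the same cycle type as $\rho$, so the two are conjugate in $S_{km}$: there is $\pi \in S_{km}$ with $\rho = \pi\,\Phi_0(\tilde{\rho})\,\pi^{-1}$. Since the normalized Hamming metric is bi-invariant, conjugation by $\pi$ is an isometry of $S_{km}$, so $\Phi := \pi\,\Phi_0(\cdot)\,\pi^{-1}$ is again an isometric embedding of $S_m$ into $S_{km}$ with $\rho = \Phi(\tilde{\rho})$, as required.

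The arithmetic of the distance bounds and the cycle-length book-keeping are routine, following at once from the two chopping lemmas. The only genuinely new step, though still elementary, is the construction of $\Phi$: the main point to get right is that one should \emph{not} expect $\rho$ itself to land in the image of the single canonical embedding $\Phi_0$, but rather to conjugate $\Phi_0$ by a cycle-type-matching permutation, invoking bi-invariance of the metric to guarantee that the conjugated map stays isometric.
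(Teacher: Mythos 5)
Your proposal is correct and follows essentially the same route as the paper: apply Lemma \ref{Lemma: chop up 2} and then Lemma \ref{Lemma: chop up 3} with $\beta = 1/3$, combine the estimates by the triangle inequality, and invoke the fact that a permutation whose $l$-cycle counts are all multiples of $k$ lies in the image of an isometric embedding of $S_m$ into $S_{km}$. The only difference is that you spell out this last step (the canonical embedding $\Phi_0$, the cycle-type matching, and the conjugation by $\pi$ using bi-invariance), which the paper asserts without proof as its ``key observation''; your elaboration is accurate and a welcome addition.
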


\begin{proof}
By Lemma~\ref{Lemma: chop up 2}, there is $\tau \in S_{km}$ such that $d\left( \tau
,\sigma \right) <\frac{8}{\sqrt[3]{m}}$, $w\left( \tau \right) \leq
\left\lceil \sqrt[3]{m}\right\rceil $ and $C_{1}\left( \tau \right) \supset
C_{1}\left( \sigma \right) $. By Lemma~\ref{Lemma: chop up 3}, there is $\rho \in
S_{km}$ such that $C_{1}\left( \rho \right) \supset C_{1}\left( \tau \right) 
$, $d\left( \rho ,\sigma \right) \leq \frac{1}{\sqrt[3]{m}}$, $w\left( \rho
\right) \leq \left\lceil \sqrt[3]{m}\right\rceil $ and $\left\vert C_{i}\left(
\sigma \right) \right\vert $ is a multiple of $k$ for every $i\in \mathbb{N}$. Thus, $\rho =\Phi \left( 
\tilde{\rho }\right) $ for some $\tilde{\rho }\in S_{m}$ and
isometric embedding $\Phi :S_{m}\rightarrow S_{km}$. Finally,%
\begin{equation*}
d\left( \sigma ,\rho \right) \leq d\left( \tau ,\rho \right) +d\left( \tau
,\sigma \right) \leq \frac{1}{\sqrt[3]{m}}+\frac{8}{\sqrt[3]{m}}\leq \frac{9%
}{\sqrt[3]{m}}.\qedhere
\end{equation*}
\end{proof}

If $k,m\in \mathbb{N}$ and $0\leq r<m$, define $\iota $ to be the injective group homomorphism of $S_{km}$ into $S_{km+r}$ obtained by sending a permutation $\sigma $ to the permutation that acts as $\sigma $ on $\left\{ 1,2,\ldots ,km\right\} $ and fixes pointwise $\left\{ km+1,
\ldots ,km+r\right\} $. Since $\iota $ preserves distances up to $1-\frac{1}{m}$, and any element of $S_{km+r}$ is at distance at most $1-\frac{1}{m}$ from some element in the range of $\iota $, it follows from Proposition~\ref{Proposition: isometric embedding} that the sequence $\left( S_{n}\right)
_{n\in \mathbb{N}}$ satisfies the hypothesis of Lemma~\ref{Lemma:
convergence of sigma2}. This concludes the proof of:

\begin{theorem}
If $\varphi $ is a formula of the form%
\begin{equation*}
\inf_{x}\sup_{y_{1},\ldots ,y_{n}}\varphi \left( y_{1},\ldots ,y_{n},x\right)
\end{equation*}%
in the language of bi-invariant metric groups, then the sequence $\left(
\varphi ^{S_{n}}\right) _{n\in \mathbb{N}}$ converges.
\end{theorem}

The analogue of Proposition~\ref{Proposition: isometric embedding} in the case of unitary groups has been proved by von Neumann in \cite{von_Neumann}, using the spectral theorem for normal matrices and an averaging argument on the eigenvalues. The precise statement is reported here for convenience of the reader. Observe that if $W\in U_{km}$, then the function from $U_{m}$ to $U_{km}$ sending $B$ to
\begin{equation*}
W\left( I_{k}\otimes B\right) W^{\ast }
\end{equation*}%
is an isometric embedding. Here, $\otimes $ denotes the usual tensor product of
matrices.

\begin{proposition}
\label{Proposition: von Neumann}If $\varepsilon >0$, there exists $m_{0}\in 
\mathbb{N}$ such that, for every $k\in \mathbb{N}$ and $m\geq n_{0}$, if $%
A\in \mathbb{M}_{km}$ is a normal matrix with operator norm at most $1$,
there exists $B\in M_{m}$ of operator norm at most $1$ and $W\in U_{km}$ such
that $\left\Vert A-W\left( I_{k}\otimes B\right) W^{\ast }\right\Vert
_{2}<\varepsilon $, where $\left\Vert \cdot \right\Vert _{2}$ is the normalized
Hilbert-Schmidt norm. Moreover, if $A$ is Hermitian (resp. unitary), $B$ can
be chosen Hermitian (resp. unitary).
\end{proposition}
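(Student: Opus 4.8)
The plan is to use the spectral theorem to replace $A$ by a diagonal matrix and then to build the unitary $W$ \emph{directly} from a grouping of eigenvectors, so that the whole statement reduces to a combinatorial estimate on the eigenvalues. Since $A$ is normal with operator norm at most $1$, fix an orthonormal eigenbasis $u_{1},\ldots ,u_{km}$ of $\mathbb{C}^{km}$ with $Au_{i}=\lambda _{i}u_{i}$ and each $\lambda _{i}$ in the closed unit disk $\overline{\mathbb{D}}=\{z\in \mathbb{C}:\left\vert z\right\vert \leq 1\}$. The key observation is that any partition of $\{1,\ldots ,km\}$ into $m$ blocks $G_{1},\ldots ,G_{m}$ of size exactly $k$, together with target values $\mu _{1},\ldots ,\mu _{m}\in \overline{\mathbb{D}}$, already produces the desired data: set $B=\mathrm{diag}(\mu _{1},\ldots ,\mu _{m})$, which has operator norm at most $1$, and let $W\colon \mathbb{C}^{k}\otimes \mathbb{C}^{m}\rightarrow \mathbb{C}^{km}$ be the unitary sending the tensor $e_{a}\otimes f_{j}$ to the $a$-th eigenvector lying in block $G_{j}$. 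One checks that $W(I_{k}\otimes B)W^{\ast }$ is diagonal in the basis $(u_{i})$ with eigenvalue $\mu _{j}$ on every vector of $G_{j}$; hence $A-W(I_{k}\otimes B)W^{\ast }$ is diagonal in $(u_{i})$ and
\begin{equation*}
\left\Vert A-W\left( I_{k}\otimes B\right) W^{\ast }\right\Vert _{2}^{2}=\frac{1}{km}\sum_{j=1}^{m}\sum_{i\in G_{j}}\left\vert \lambda _{i}-\mu _{j}\right\vert ^{2}.
\end{equation*}
Everything thus reduces to grouping the $km$ eigenvalues into $m$ blocks of size $k$ that have small diameter on average, with $\mu _{j}$ chosen inside the $j$-th block.

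Next I would carry out the \emph{averaging} (binning) step. Given $\varepsilon >0$, cover $\overline{\mathbb{D}}$ by finitely many disjoint Borel cells $E_{1},\ldots ,E_{M}$, each of diameter less than $\varepsilon /2$, where $M=M(\varepsilon )$ depends only on $\varepsilon $. Distribute the eigenvalues among the cells; if $E_{s}$ contains $N_{s}$ of them, write $N_{s}=q_{s}k+r_{s}$ with $0\leq r_{s}<k$ and form $q_{s}$ blocks of size $k$ lying inside $E_{s}$, with $\mu _{j}\in E_{s}$ for each. For these ``good'' blocks every summand is at most $(\varepsilon /2)^{2}$, and there are at most $m$ of them in total, so their contribution to the normalized square is at most $\varepsilon ^{2}/4$. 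The remaining $(m-\sum _{s}q_{s})k<Mk$ eigenvalues are grouped arbitrarily into fewer than $M$ ``leftover'' blocks; since $\left\vert \lambda _{i}-\mu _{j}\right\vert \leq 2$ on $\overline{\mathbb{D}}$, these contribute at most $\frac{1}{km}\cdot Mk\cdot 4=4M/m$. Choosing $m_{0}=\lceil 16M/\varepsilon ^{2}\rceil $ makes the total at most $\varepsilon ^{2}/4+\varepsilon ^{2}/4<\varepsilon ^{2}$ for every $m\geq m_{0}$ and every $k$, which is the assertion; crucially $m_{0}$ depends only on $\varepsilon $.

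For the Hermitian and unitary refinements I would only adjust the geometry of the cells. If $A$ is Hermitian its eigenvalues lie in $[-1,1]$, so I take the $E_{s}$ to be subintervals of $[-1,1]$ and the $\mu _{j}$ real; then $B=\mathrm{diag}(\mu _{j})$ is Hermitian with operator norm at most $1$. If $A$ is unitary its eigenvalues lie on the unit circle, so I take the $E_{s}$ to be sub-arcs and the $\mu _{j}$ on the circle; then $B$ is unitary and $W(I_{k}\otimes B)W^{\ast }\in U_{km}$. In both cases the estimate above is unchanged.

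I expect the main obstacle to be the divisibility issue in the binning step: the number of eigenvalues in a cell need not be a multiple of $k$, and the leftover eigenvalues can be spread over the whole disk, producing blocks of large diameter. The averaging argument is precisely what tames this, because the number of cells $M$ is fixed once $\varepsilon $ is chosen, so there are fewer than $M$ leftover blocks regardless of $k$; after normalization by the dimension $km$ their total contribution is $O(M/m)$, which is forced below $\varepsilon ^{2}$ by taking $m$ large. This is exactly why $m_{0}$ can be chosen uniformly in $k$, as the statement demands.
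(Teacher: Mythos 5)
Your proof is correct, and it takes essentially the same route as the argument the paper attributes to von Neumann \cite{von_Neumann}: the paper gives no proof of its own for this proposition, describing the cited one as resting on the spectral theorem for normal matrices together with an averaging argument on the eigenvalues, which is precisely your diagonalization-plus-binning scheme. The crux of the statement, namely that $m_{0}$ is uniform in $k$, is correctly secured by your observation that the number of leftover blocks is bounded by the number $M(\varepsilon )$ of cells, independently of $k$, so their contribution is $O(M/m)$ after normalization.
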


In order to show that the sequence of unitary groups satisfies the hypothesis of Lemma~\ref{Lemma:
convergence of sigma2} it remains only to show that if $k,m\in \mathbb{N}$ and $0\leq r<k$, then there is an injective group homomorphism $\iota $ of $U_{km}$ into $U_{km+r}$ that almost preserves the metric and such that any element of $U_{km+r}$ is close to some element in the range of $\iota $. This is done in the following lemma.

\begin{lemma}
\label{Lemma: embedding}If $k,m\in \mathbb{N}$ and $0\leq r<m $, then the function $\iota $ from $U_{km}$ to $U_{km+r}$
sending $A$ to%
\begin{equation*}
\begin{pmatrix}
A & 0 \\ 
0 & I_{r}%
\end{pmatrix}%
\end{equation*}%
(where $I_{r}$ is the $r\times r$ identity matrix) is an injective group homomorphism
that preserves the metric up to $\frac{1}{k}$. Moreover, any element of $U_{km+r}$ is at distance at most $\frac{4}{\sqrt[4]{k}}$ from some element in the range of $\iota $.
\end{lemma}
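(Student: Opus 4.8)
The plan is to verify the three assertions in turn: that $\iota$ is an injective group homomorphism, that it distorts the metric by at most $\frac{1}{k}$, and that its range is $\frac{4}{\sqrt[4]{k}}$-dense in $U_{km+r}$. The first two are routine. I would first note that the block structure gives $\iota(A)\iota(B)=\iota(AB)$, $\iota(I_{km})=I_{km+r}$, and injectivity at once. For the metric, $\iota(A)-\iota(B)$ is the block matrix with $A-B$ in the upper-left corner and zeros elsewhere, so $\|\iota(A)-\iota(B)\|_2=\|A-B\|_2$ and hence
\begin{equation*}
d\left(\iota(A),\iota(B)\right)=\frac{\|A-B\|_2}{2\sqrt{km+r}}=\sqrt{\frac{km}{km+r}}\,d(A,B).
\end{equation*}
Since $d(A,B)\leq 1$ and $\sqrt{t}\geq t$ for $t\in[0,1]$, the difference $d(A,B)-d(\iota(A),\iota(B))$ is nonnegative and at most $1-\frac{km}{km+r}=\frac{r}{km+r}<\frac{1}{k}$, using $r<m$. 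This settles the first two claims.

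The substance of the lemma is the density statement, and this is where the work lies. The plan is to fix $U\in U_{km+r}$, write it in block form
\begin{equation*}
U=\begin{pmatrix} U_{11} & U_{12}\\ U_{21} & U_{22}\end{pmatrix}
\end{equation*}
with $U_{11}$ of size $km\times km$ and $U_{22}$ of size $r\times r$, and observe that for any $A\in U_{km}$,
\begin{equation*}
\|U-\iota(A)\|_2^2=\|U_{11}-A\|_2^2+\|U_{12}\|_2^2+\|U_{21}\|_2^2+\|U_{22}-I_r\|_2^2.
\end{equation*}
The last three terms are independent of $A$ and can be bounded using only the unitarity of $U$: the $r$ columns forming $\left(\begin{smallmatrix}U_{12}\\U_{22}\end{smallmatrix}\right)$ are truncations of unit vectors, giving $\|U_{12}\|_2^2\leq r$; the rows give $\|U_{21}\|_2^2\leq r$; and $\|U_{22}\|_2\leq\sqrt{r}$ yields $\|U_{22}-I_r\|_2^2\leq 4r$. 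So these contribute at most $6r$ regardless of $A$.

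It then remains to choose $A$ making $\|U_{11}-A\|_2$ small, and the key device will be the polar (singular value) decomposition $U_{11}=V\Sigma W^{*}$ with $V,W\in U_{km}$ and $\Sigma=\mathrm{diag}(s_1,\ldots,s_{km})$, where $0\leq s_i\leq 1$ because $U_{11}$ is a compression of a unitary. Taking $A=VW^{*}\in U_{km}$ gives $\|U_{11}-A\|_2^2=\sum_i(1-s_i)^2\leq\sum_i(1-s_i^2)=km-\|U_{11}\|_2^2$, and since the first $km$ rows of $U$ are orthonormal one has $\|U_{11}\|_2^2+\|U_{12}\|_2^2=km$, so $km-\|U_{11}\|_2^2=\|U_{12}\|_2^2\leq r$. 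Combining everything, $\|U-\iota(A)\|_2^2\leq 7r$, whence
\begin{equation*}
d\left(U,\iota(A)\right)=\frac{\|U-\iota(A)\|_2}{2\sqrt{km+r}}\leq\frac{\sqrt{7r}}{2\sqrt{km}}<\frac{\sqrt{7}}{2\sqrt{k}}\leq\frac{4}{\sqrt[4]{k}},
\end{equation*}
using $r<m$ in the middle step. The main obstacle is precisely the estimate $\min_{A\in U_{km}}\|U_{11}-A\|_2^2\leq km-\|U_{11}\|_2^2$: once the polar decomposition supplies the minimizer and reduces the bound to the off-block mass $\|U_{12}\|_2^2$ that unitarity already controls, the remaining inequalities are mechanical. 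I note that the argument in fact produces the stronger bound $\frac{\sqrt{7}}{2}k^{-1/2}$; only the weaker $\frac{4}{\sqrt[4]{k}}$ is needed so that $\iota$, together with Proposition~\ref{Proposition: von Neumann}, verifies the hypothesis of Lemma~\ref{Lemma: convergence of sigma2}.
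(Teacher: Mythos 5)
Your proof is correct, and the density argument takes a genuinely different route from the paper's. Both proofs begin the same way: cut out the upper-left $km\times km$ corner of the given unitary and observe that unitarity forces the remaining blocks to have Hilbert--Schmidt mass $O(r)$. But at the key step --- replacing the corner $U_{11}$ by a genuine unitary --- the paper invokes an external quantitative stability result (Corollary 1 of the cited Glebsky preprint), which produces $B\in U_{km}$ with $\left\Vert A-B\right\Vert _{2}^{2}\leq 36\left\Vert A^{\ast }A-I_{km}\right\Vert _{2}\leq \frac{36}{\sqrt{k}}$, and it is this $k^{-1/2}$ (inside a square root) that dictates the stated $\frac{4}{\sqrt[4]{k}}$. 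You instead make the argument self-contained: writing $U_{11}=V\Sigma W^{\ast }$ and taking the unitary polar factor $VW^{\ast }$, the bound
\begin{equation*}
\left\Vert U_{11}-VW^{\ast }\right\Vert _{2}^{2}=\sum_{i}\left( 1-s_{i}\right) ^{2}\leq \sum_{i}\left( 1-s_{i}^{2}\right) =km-\left\Vert U_{11}\right\Vert _{2}^{2}=\left\Vert U_{12}\right\Vert _{2}^{2}\leq r
\end{equation*}
uses only that the singular values $s_{i}$ lie in $\left[ 0,1\right] $ (true because $U_{11}$ is a compression of a unitary) and the orthonormality of the rows of $U$. This buys two things: no citation is needed, and the final estimate improves to $\frac{\sqrt{7}}{2\sqrt{k}}=O(k^{-1/2})$, strictly better than the paper's $O(k^{-1/4})$; the weaker stated bound is of course all that the application to Lemma~\ref{Lemma: convergence of sigma2} requires. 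What the paper's route buys in exchange is generality of the black box: Glebsky's result applies to any almost-unitary matrix, without the contraction hypothesis your singular-value estimate exploits --- though in this lemma that hypothesis is freely available. Your treatment of the first two claims also matches the paper (and is in fact slightly more careful: the paper writes $d(A,B)-d(\iota(A),\iota(B))=1-\sqrt{\frac{km}{n}}$, omitting the factor $d(A,B)$, whereas your version correctly keeps it and the inequality still closes).
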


\begin{proof}
Denote $km+r$ by $n$. Direct calculation shows that if $A,B\in U_{km}$,%
\begin{equation*}
0\leq d\left( A,B\right) -d\left( \iota \left( A\right) ,\iota \left(
B\right) \right) =1-\sqrt{\frac{km}{n}}\leq \frac{1}{k}.
\end{equation*}%
Suppose now that $C\in U_{n}$ and define $A$ to be the element of $\mathbb{M}%
_{km}$ such that $A_{i,j}=C_{i,j}$ for $1\leq i,j\leq km$. It is easy to see that, since $U$ is
unitary,%
\begin{equation*}
\left\Vert A^{\ast }A-I_{km}\right\Vert _{2}^{2}\leq \frac{1}{k}\text{.}
\end{equation*}%
By \cite{Glebsky}, Corollary 1, there exists $B\in U_{km}$ such that%
\begin{equation*}
\left\Vert A-B\right\Vert _{2}^{2}\leq 36\left\Vert A^{\ast
}A-I_{km}\right\Vert _2\leq \frac{36}{\sqrt{k}}\text{.}
\end{equation*}%
Thus,%
\begin{equation*}
d\left( \iota \left( B\right) ,C\right) \leq \frac{1}{2}\sqrt{\frac{%
km\left\Vert A-B\right\Vert _{2}^{2}+6r}{n}}\leq \frac{4}{\sqrt[4]{k}}\text{.%
}\qedhere
\end{equation*}
\end{proof}

This concludes the proof of:
\begin{theorem}
If $\varphi $ is a formula of the form%
\begin{equation*}
\inf_{x}\sup_{y_{1},\ldots ,y_{n}}\varphi \left( y_{1},\ldots ,y_{n},x\right)
\end{equation*}%
in the language of bi-invariant metric groups, then the sequence $\left(
\varphi ^{U_{n}}\right) _{n\in \mathbb{N}}$ converges.
\end{theorem}

\section*{Acknowledgements}
The author would like to thank his supervisor, Ilijas Farah, for help and support,
Samuel Coskey and Nicola Watson for their comments and suggestions, and \L ukasz Grabowski, Bradd
Hart, Ita\"{\i} Ben Yacoov, Ferenc Bencs, Louis-Philippe Thibault and Nigel
Sequeira for many useful conversations.


\begin{thebibliography}{99}

\bibitem{BY-B-H-U}
Ita{\"{\i}} Ben~Yaacov, Alexander Berenstein, C.~Ward Henson, and Alexander
  Usvyatsov, \emph{Model theory for metric structures}, Model theory with
  applications to algebra and analysis. {V}ol. 2, London Math. Soc. Lecture
  Note Ser., vol. 350, Cambridge Univ. Press, Cambridge, 2008, pp.~315--427.
  \MR{2436146 (2009j:03061)}

\bibitem{Elek} G{\'a}bor Elek, \emph{Connes embeddings and von Neumann regular
closures of group algebras}, Trans. Amer. Math Soc. \textbf{365} (2013), no. 6, 3019--3039. MR3034457


\bibitem{Elek-Szabo1}
G{\'a}bor Elek and Endre Szab{\'o}, \emph{Hyperlinearity, essentially free
  actions and {$L^2$}-invariants. {T}he sofic property}, Math. Ann.
  \textbf{332} (2005), no.~2, 421--441. \MR{2178069 (2007i:43002)}


\bibitem{Farah-Hart-Sherman1} Ilijas Farah, Bradd Hart, and David Sherman, \emph{%
Model theory of operator algebras I: Stability}, Bull. Lond. Math. Soc. \textbf{45} (2013), no. 4, 825--838. MR3081550

\bibitem{Farah-Hart-Sherman2} Ilijas Farah, Bradd Hart, and David Sherman,\emph{\
Model theory of operator algebras II: Model theory}, to appear in Israel J. Math.

\bibitem{Farah-Shelah}
Ilijas Farah and Saharon Shelah, \emph{A dichotomy for the number of
  ultrapowers}, J. Math. Log. \textbf{10} (2010), no.~1-2, 45--81. \MR{2802082
  (2012j:03084)}

\bibitem{Feldman}
Jacob Feldman, \emph{Isomorphisms of finite type {II} rings of operators}, Ann.
  of Math. (2) \textbf{63} (1956), 565--571. \MR{0078641 (17,1223h)}

\bibitem{Ge-Hadwin}
Liming Ge and Don Hadwin, \emph{Ultraproducts of {$C^*$}-algebras}, Recent
  advances in operator theory and related topics ({S}zeged, 1999), Oper. Theory
  Adv. Appl., vol. 127, Birkh\"auser, Basel, 2001, pp.~305--326. \MR{1902808
  (2003f:46083)}

\bibitem{Glebsky} Lev Glebsky,\emph{\ Almost commuting matrices with respect to normalized
Hilbert-Schmidt norm}, preprint, arXiv:1002.3082, 2010

\bibitem{Goodearl}
K.~R. Goodearl, \emph{von {N}eumann regular rings}, second ed., Robert E.
  Krieger Publishing Co. Inc., Malabar, FL, 1991. \MR{1150975 (93m:16006)}

\bibitem{Pestov1}
Vladimir~G. Pestov, \emph{Hyperlinear and sofic groups: a brief guide}, Bull.
  Symbolic Logic \textbf{14} (2008), no.~4, 449--480. \MR{2460675
  (2009k:20103)}

\bibitem{Thomas}
Simon Thomas, \emph{On the number of universal sofic groups}, Proc. Amer. Math.
  Soc. \textbf{138} (2010), no.~7, 2585--2590. \MR{2607888 (2011c:20084)}

\bibitem{von_Neumann}
John von Neumann, \emph{Approximative properties of matrices of high finite
  order}, Portugaliae Math. \textbf{3} (1942), 1--62. \MR{0006137 (3,260c)}



\end{thebibliography}

\end{document}